\def\op{\operatorname}
\def\mmod{\kern-1pt\operatorname{-mod}}
\newtheorem{Thm}{Theorem}[section]
\newtheorem{Lem}[Thm]{Lemma}
\newtheorem{Cor}[Thm]{Corollary}
\newtheorem{Prop}[Thm]{Proposition}
\newtheorem{Rem}[Thm]{Remark}
\numberwithin{equation}{section}
\begin{document}

\title[Alvis-Curtis Duality]{Alvis-Curtis Duality for Representations of  Reductive Groups with Frobenius Maps}

\author{Junbin Dong}
\address{Institute of Mathematical Sciences, ShanghaiTech University, Shanghai 201210, P.R. China.}
\email{dongjunbin1990@126.com}


\subjclass[2010]{20C07}

\date{April 13, 2020}

\keywords{Reductive group, abstract representation,  Alvis-Curtis duality.}

\begin{abstract}
We generalize the Alvis-Curtis duality to the abstract representations of reductive groups with Frobenius maps. Similar to  the case of representations of finite reductive groups, we show that the Alvis-Curtis duality of infinite type which we define in this paper also interchanges the irreducible representations in the principal representation category.
\end{abstract}

\maketitle

\section{Introduction}
Let ${\bf G}$ be a connected reductive group defined over the finite field $\mathbb{F}_q$. Assume that $\Bbbk$ is a field which may  be different from $\bar{\mathbb{F}}_q$. According to the results of Borel and Tits (see \cite{BT}), except the trivial representation, all other irreducible representations of  ${\bf G}$ are infinite-dimensional when  ${\bf G}$  is semisimple and $\op{char}\Bbbk\neq \op{char} \bar{\mathbb{F}}_q$.
So it seems difficult to study the abstract representations of ${\bf G}$ over an arbitrary field $\Bbbk$.
Now let $G_{q^a}$ be the set of $\mathbb{F}_{q^a}$-points of $\bf G$, then ${\bf G}=\bigcup G_{q^a}$. Around 2013,  N,Xi provided a new and fundamental method to study the abstract representations of ${\bf G}$ over ${\bf \Bbbk}$ by taking the direct limit of the finite-dimensional representations of $G_{q^a}$ and got many interesting results (see \cite{X}). For example, given an irreducible $G_{q^a}$-module $M_{q^a}$ for each positive integer $a$ which satisfies $M_{q^a}\subseteq M_{q^b}$ as $\Bbbk$-vector spaces when $a\mid b$,  one has that $M=\bigcup M_{q^a}$ is a simple $\Bbbk {\bf G}$-module. However there are counterexamples to show that not all simple $\Bbbk {\bf G}$-modules can be obtained in this way (see Section 4 in \cite{CD1}).

One important class of irreducible modules of a reductive group (resp. Lie algebra) comes from certain induced modules from an one-dimensional character of a Borel subgroup (resp. Borel subalgebra).
For the rational representations of algebraic groups and, the representations of Lie algebras in the BGG category $\mathcal{O}$, it was known that all irreducible modules are the simple quotients of Weyl modules and Verma modules, respectively. Now let ${\bf B}$ be a Borel subgroup of ${\bf G}$ and let $\bf T$ be a maximal torus contained in $\bf B$, both being defined over $\mathbb{F}_q$.
Motivated by these classical results, we consider the abstract induced module $\mathbb{M}(\theta)=\Bbbk{\bf G}\otimes_{\Bbbk{\bf B}}{\bf \Bbbk}_\theta$ in \cite{CD3}, where $\theta$ is a character of $\bf T$ which can also be regarded as a $\Bbbk {\bf B}$-module ${\bf \Bbbk}_\theta$.
All the composition factors of $\mathbb{M}(\theta)$ are given when $\op{char}\Bbbk\neq \op{char} \bar{\mathbb{F}}_q$ or $\Bbbk=\bar{\mathbb{F}}_q$ and $\theta$ is rational. So we have got  a series of infinite dimensional simple $\Bbbk {\bf G}$-modules. However it seems to be impossible to obtain all simple $\Bbbk {\bf G}$-modules.

The Alvis-Curtis duality is a duality operation on the characters of finite reductive groups, introduced and studied by D. Alvis (see \cite{Al}) and  C.W. Curtis (see \cite{Cu}). P. Deligne and G. Lusztig consider this duality on the representations (see \cite{DL1},  \cite{DL2}). The Alvis-Curtis duality interchanges some irreducible representations, for example, the trivial and Steinberg representations. Thus in this paper we try to generalize this duality to the abstract representations of a reductive group ${\bf G}$.
Similar to the finite-type case, it also interchanges simple $\Bbbk {\bf G}$-modules in the principal representation category  $\mathscr{O}({\bf G})$ (see Theorem \ref{dual of basis}), for example, the trivial module and the infinite dimensional Steinberg module.

\medskip
\noindent{\bf Acknowledgements.} The author is grateful to Prof. Nanhua Xi for his constant encouragement and guidance. The author  thanks Prof. Ming Fang and Dr. Xiaoyu Chen for their useful discussions. The author would also like to thank the referees for careful reading and  helpful comments.

%
%

\section{Category $\mathscr{O}({\bf G})$ and three bases of $K_0(\mathscr{O}({\bf G}))$ }

In this section we recall some results in \cite{CD3} about the composition factors of the abstract induced module $\mathbb{M}(\theta)=\Bbbk{\bf G}\otimes_{\Bbbk{\bf B}}{\bf \Bbbk}_\theta$. Then we introduce the principal representation category $\mathscr{O}({\bf G})$ and give three bases of $K_0(\mathscr{O}({\bf G}))$. Firstly we introduce some notations for convenience. The setting of ${\bf G} ,{\bf B},{\bf T} $ is as before. We denote by $\Phi=\Phi({\bf G};{\bf T})$ the corresponding root system, and by $\Phi^+$ (resp. $\Phi^-$) the set of positive (resp. negative) roots determined by ${\bf B}$. Let $W=N_{\bf G}({\bf T})/{\bf T}$ be the corresponding Weyl group. We denote by $\Delta=\{\alpha_i\mid i\in I\}$ the set of simple roots and by $S=\{s_i\mid i\in I\}$ the corresponding simple reflections in $W$. For each $w\in W$, let $\dot{w}$ be one representative in $N_{\bf G}({\bf T})$. We denote by ${\bf U}=R_u({\bf B})$  the ($F$-stable) unipotent radical of ${\bf B}$ and for any $w\in W$, let ${\bf U}_w$ (resp. ${\bf U}_w'$) be the subgroup of ${\bf U}$ generated by all ${\bf U}_\alpha$ (the root subgroup of $\alpha\in\Phi^+$) with $w\alpha\in\Phi^-$ (resp. $w\alpha\in\Phi^+$). The multiplication map ${\bf U}_w\times{\bf U}_w'\rightarrow{\bf U}$ is a bijection (see \cite{Ca}).
For any subgroup $\bf H$ of $\bf G$ defined over $\mathbb{F}_q$ and
any power $q^a$ of $q$, denote by $H_{q^a}$ the set of
$\mathbb{F}_{q^a}$-points of $\bf H$. Then we have ${\bf H}=\bigcup_{a=1}^{\infty}H_{q^a}$.

\medskip
From now on  we assume that $\Bbbk$ is an algebraically closed field which satisfies $\Bbbk=\bar{\mathbb{F}}_q$ if  $\op{char}\Bbbk =\op{char} \bar{\mathbb{F}}_q$.
All representations we consider are over  $\Bbbk$. Let $\widehat{\bf T}$ be the set of characters of ${\bf T}$ when  $\op{char}\Bbbk\neq \op{char} \bar{\mathbb{F}}_q$ and let it be the set of rational characters when $\Bbbk=\bar{\mathbb{F}}_q$. Then the Weyl group $W$ acts naturally on $\widehat{\bf T}$ by
$$(w\cdot \theta ) (t):=\theta^w(t)=\theta(\dot{w}^{-1}t\dot{w})$$
for any $\theta\in\widehat{\bf T}$. We write $x{\bf 1}_{\theta}:=x\otimes{\bf 1}_{\theta}\in\mathbb{M}(\theta)$,  where ${\bf 1}_{\theta}$ is a nonzero element in ${\bf \Bbbk}_\theta$.
Then it is easy to see  that $\mathbb{M}(\theta)=\sum_{w\in W}\Bbbk {\bf U}_{w^{-1}}\dot{w}{\bf 1}_{\theta}$ by the Bruhat decomposition and moreover, $\{ u \dot{w}{\bf 1}_{\theta} \mid  w\in W, u\in {\bf U}_{w^{-1}}\} $ is a basis of $\mathbb{M}(\theta)$.

For each $i \in I$, let ${\bf G}_i$ be the subgroup of $\bf G$ generated by ${\bf U}_{\alpha_i}, {\bf U}_{-\alpha_i}$ and we set ${\bf T}_i= {\bf T}\cap {\bf G}_i$. For $\theta\in\widehat{\bf T}$, define the subset $I(\theta)$ of $I$ by $$I(\theta)=\{i\in I \mid \theta| _{{\bf T}_i} \ \text {is trivial}\}.$$
 For $J\subset I(\theta)$, let ${\bf G}_J$ be the subgroup of $\bf G$ generated by ${\bf G}_i$, $i\in J$. We choose a representative $\dot{w}\in {\bf G}_J$ for each $w\in W_J$. Thus the element $w{\bf 1}_\theta:=\dot{w}{\bf 1}_\theta$  $(w\in W_J)$ is well defined. For $J\subset I(\theta)$, we set
$$\eta(\theta)_J=\sum_{w\in W_J}(-1)^{\ell(w)}w{\bf 1}_{\theta},$$
and let $\Delta(\theta)_J=\displaystyle \sum_{w\in W}\Bbbk {\bf U}\dot{w}\eta(\theta)_J$,which is a submodule of $\mathbb{M}(\theta)$.
For any $J\subset I(\theta)$, we define
$$E(\theta)_J=\Delta(\theta)_J/\Delta(\theta)_J',$$
where $\Delta(\theta)_J'$ is the sum of all $\Delta(\theta)_K$ with $J\subsetneq K\subset I(\theta)$. The following theorem (see \cite{CD3}) gives all the composition factors of $\mathbb{M}(\theta)$ explicitly.

\begin{Thm}\label{EJ}  Let $\theta \in \widehat{\bf T}$. Then all the $\Bbbk{\bf G}$-modules $E(\theta)_J$ $(J\subset I(\theta))$ are irreducible and pairwise non-isomorphic. In particular, $\mathbb{M}(\theta)$ has exactly $2^{|I(\theta)|}$ composition factors each of multiplicity one.
\end{Thm}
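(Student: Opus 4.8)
The plan is to reduce the statement to a careful analysis of the "top layers" of the filtration of $\mathbb{M}(\theta)$ by the submodules $\Delta(\theta)_J$, mimicking the parabolic-induction picture from the finite-group case but working directly with the explicit basis $\{u\dot{w}{\bf 1}_\theta\}$. First I would record the combinatorial structure: the assignment $J\mapsto \Delta(\theta)_J$ is order-reversing on subsets of $I(\theta)$, with $\Delta(\theta)_\emptyset=\mathbb{M}(\theta)$ and $\Delta(\theta)_{I(\theta)}$ the "Steinberg-type" submodule, and $\Delta(\theta)_J'=\sum_{J\subsetneq K}\Delta(\theta)_K$. Inclusion–exclusion on this poset then shows that, in the Grothendieck group, $[\mathbb{M}(\theta)]=\sum_{J\subset I(\theta)}[E(\theta)_J]$, so the multiplicity-one and "exactly $2^{|I(\theta)|}$ factors" assertions follow \emph{once} we know each $E(\theta)_J$ is irreducible (or zero) and that the $2^{|I(\theta)|}$ classes are distinct and nonzero. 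So the real content is: (i) $E(\theta)_J\neq 0$; (ii) $E(\theta)_J$ is irreducible; (iii) $E(\theta)_J\not\cong E(\theta)_K$ for $J\neq K$.

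For (i) and a first handle on (ii), I would exploit the standard parabolic reduction. Let ${\bf P}_J={\bf G}_J{\bf B}$ be the parabolic attached to $J\subset I(\theta)\subset I$, with Levi decomposition ${\bf P}_J={\bf L}_J\ltimes{\bf U}_{{\bf P}_J}$. Because $\theta$ is trivial on ${\bf T}_i$ for $i\in J$, the element $\eta(\theta)_J=\sum_{w\in W_J}(-1)^{\ell(w)}\dot w{\bf 1}_\theta$ spans a copy of (the restriction to ${\bf L}_J$ of) the Steinberg-like module $\mathrm{St}_{{\bf L}_J}\otimes\theta$ inside $\mathbb{M}(\theta)$, and $\Delta(\theta)_J=\Bbbk{\bf G}\cdot\eta(\theta)_J$ is the ${\bf G}$-module generated by it. One shows $\Delta(\theta)_J\cong\Bbbk{\bf G}\otimes_{\Bbbk{\bf P}_J}(\text{that }{\bf P}_J\text{-module})$ — i.e. it is an honest induced module — by checking that the obvious surjection from the induced module is injective on the basis coming from Bruhat cells $\{w\in W^J\}$ (minimal-length coset representatives), where the "$\pm$ alternating sum over $W_J$" degenerates to a single nonzero vector. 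Then $E(\theta)_J$ is the cokernel of the sum of the larger induced modules $\Delta(\theta)_K$, $K\supsetneq J$, inside $\Delta(\theta)_J$; concretely it has a basis indexed by $w\in W^J$ with $w$ "not extendable into any larger $W_K$", and nonvanishing is a direct basis computation.

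For irreducibility (ii) and distinctness (iii), the cleanest route is to identify $E(\theta)_J$ as $\mathrm{Ind}_{{\bf P}_J}^{{\bf G}}$ of an irreducible ${\bf P}_J$-module on which ${\bf U}_{{\bf P}_J}$ acts trivially and ${\bf L}_J$ acts by $\mathrm{St}_{{\bf L}_J}\otimes\theta$, and then to invoke/establish an analogue of the classical theorem that such parabolically induced modules are irreducible, with $\mathrm{Ind}_{{\bf P}_J}^{{\bf G}}(-)\cong\mathrm{Ind}_{{\bf P}_K}^{{\bf G}}(-)$ forcing $J=K$ (this is where the Steinberg factor is essential: the "associated parabolic" is pinned down by the support of the Jacquet-type restriction). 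In the abstract (infinite) setting one cannot quote Harish-Chandra theory verbatim, so I would run the argument by hand: take $0\neq v\in E(\theta)_J$, expand it in the Bruhat basis, and use the ${\bf U}_{w}$-translations together with torus weights for $\theta$ and the $s_i$-action for $i\in J$ to push $v$ onto the highest cell and recover the cyclic generator — exactly the kind of "bottom cell reduction" used in \cite{X} and \cite{CD3}. The main obstacle is precisely this last step: controlling the ${\bf G}$-action on $E(\theta)_J$ well enough to show \emph{any} nonzero submodule is everything, since the module is infinite-dimensional and the naive induction/restriction adjunctions available for finite groups must be replaced by explicit manipulations with the directed union ${\bf G}=\bigcup G_{q^a}$ and the compatible bases on each finite level.
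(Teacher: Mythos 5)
The first thing to say is that the paper contains no proof of Theorem \ref{EJ}: it is imported verbatim from \cite{CD3} (``The following theorem (see \cite{CD3})\ldots''), so there is no internal argument to match yours against. Judged on its own terms, your skeleton is the right one --- reduce to (i) $E(\theta)_J\neq 0$, (ii) irreducibility, (iii) pairwise non-isomorphism, then count --- and your observation that $\Delta(\theta)_J$ is the parabolically induced module $\Bbbk{\bf G}\otimes_{\Bbbk{\bf P}_J}(\mathrm{St}_{{\bf L}_J}\otimes\theta)$ is correct and in fact reappears in the paper as Lemma \ref{lemma2}. But the proposal has genuine gaps, and one outright error, precisely at the points that carry the content of the theorem.

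The error: you propose to ``identify $E(\theta)_J$ as $\mathrm{Ind}_{{\bf P}_J}^{{\bf G}}$ of an irreducible ${\bf P}_J$-module on which ${\bf L}_J$ acts by $\mathrm{St}_{{\bf L}_J}\otimes\theta$'' and then invoke an irreducibility criterion for parabolically induced modules. That induced module \emph{is} $\Delta(\theta)_J$, which for $J\subsetneq I(\theta)$ is reducible --- by the very theorem being proved it has $2^{|I(\theta)\setminus J|}$ composition factors (e.g.\ $\Delta(\theta)_\emptyset=\mathbb{M}(\theta)$). $E(\theta)_J$ is a proper quotient of it (equivalently the socle of $\nabla(\theta)_J$), so no such criterion can apply to it directly. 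You then retreat to the ``expand a nonzero $v$ in the Bruhat basis and push it back to the cyclic generator'' argument, which is indeed how \cite{CD3} proceeds, but you flag it as the main obstacle and do not carry it out; since this step is essentially the whole theorem, what remains is a plan rather than a proof. The same holds for (iii): ``the support of the Jacquet-type restriction'' is a plausible separating invariant, but nothing is computed. Finally, the opening claim that ``inclusion--exclusion on the poset'' already yields $[\mathbb{M}(\theta)]=\sum_J[E(\theta)_J]$ is not formal: to extract a composition series with multiplicity one from the family $\{\Delta(\theta)_K\}$ one must control the intersections $\Delta(\theta)_J\cap\sum_{|K|>|J|}\Delta(\theta)_K$ (not merely the containment $\Delta(\theta)_J'\subset\Delta(\theta)_J$), and in particular show $\Delta(\theta)_J\not\subset\sum_{|K|>|J|}\Delta(\theta)_K$; this again requires the explicit bases of the $\Delta(\theta)_K$ from \cite{CD3} and does not follow from the poset structure alone.
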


Now we introduce the principal representation category $\mathscr{O}({\bf G})$. It is the full subcategory of $\Bbbk{\bf G}$-Mod such that any object $M$ in $\mathscr{O}({\bf G})$
is of finite length and its composition factors are $E(\theta)_J$ for some $\theta \in \widehat{\bf T}$ and $J\subset I(\theta)$. Denote by $K_0(\mathscr{O}({\bf G}))$ the Grothendieck group of $\mathscr{O}({\bf G})$. It is obvious that $\{[E(\theta)_J] \mid \theta\in \widehat{\bf T}, J\subset I(\theta)\}$ is a basis of $K_0(\mathscr{O}({\bf G}))$.

\medskip
The irreducible $\Bbbk {\bf G}$-module $E(\theta)_J$ can also be realized by parabolic induction.
Now let ${\bf P}_J=\langle {\bf T},{\bf U}_{\alpha}\mid \alpha \in \Phi^+\cup \Phi_J \rangle$ be the standard parabolic group containing ${\bf B}$. We have the Levi decomposition ${\bf P}_J={\bf U}_J\rtimes {\bf L}_J$, where
\begin{equation*} {\bf U}_J:=\prod_{\alpha\in \Phi^+\backslash \Phi_J}{\bf U}_\alpha\qquad {\bf L}_J:=\langle {\bf T},{\bf U}_{\alpha}\mid \alpha\in \Phi_J \rangle.\end{equation*}
Let $\theta\in\widehat{\bf T}$ and $K\subset I(\theta)$. Because $\theta|_{{\bf T}_i}$ is trivial for all $i\in K$, it induces a character (still denoted by $\theta$) of $\overline{\bf T}={\bf T}/[{\bf L}_K,{\bf L}_K]\cap{\bf T}$. Therefore, $\theta$ is regarded as a character of ${\bf L}_K$ by the homomorphism ${\bf L}_K\rightarrow\overline{\bf T}$ (with the kernel $[{\bf L}_K,{\bf L}_K]$), and hence as a character of ${\bf P}_K$ by letting ${\bf U}_K$ acts trivially. Set $\mathbb{M}(\theta, K):=\Bbbk{\bf G}\otimes_{\Bbbk{\bf P}_K}\theta$.  Let ${\bf 1}_{\theta, K}$ be a nonzero element in the one-dimensional module $\Bbbk_\theta$ associated to $\theta$. We abbreviate $x{\bf 1}_{\theta, K}:=x\otimes{\bf 1}_{\theta, K} \in \mathbb{M}(\theta, K)$ as before.

Now set $\mathscr{R}(w)=\{i\in I\mid ws_i< w \}$ and $Z_K= \{ w\in W\mid \mathscr{R}(w)\subset I\backslash K \}$. Then by the same argument of \cite[Lemma 6.2]{D}, we have
$$\mathbb{M}(\theta, K)=\sum_{w\in Z_K}\Bbbk {\bf U}_{w^{-1}}\dot{w}{\bf 1}_{\theta, K} $$
and $\{ u \dot{w}{\bf 1}_{\theta, K}  \mid  w\in Z_K, u\in {\bf U}_{w^{-1}}\} $ is a basis of
$\mathbb{M}(\theta, K)$. Therefore $\mathbb{M}(\theta, K)$ is
a indecomposable $\Bbbk {\bf G}$-module. Indeed, we use the basis of $\mathbb{M}(\theta, K)$ to consider the endomorphism algebra of $\mathbb{M}(\theta, K)$ and have
$$\op{End}_{\bf G}(\mathbb{M}(\theta, K))\cong \op{Hom}_{{\bf P}_K}(\Bbbk_{\theta}, \mathbb{M}(\theta, K))\cong \Bbbk$$ which implies that  $\mathbb{M}(\theta, K)$ is indecomposable.

Using the same proof as \cite[Theorem 6.3]{D} and \cite[Corollary 3.8]{CD1}, we have the following proposition.
\begin{Prop}\label{Parabolic}
For $K \subset I(\theta)$, $\mathbb{M}(\theta, K) \cong \displaystyle \mathbb{M}(\theta)\Big{/} \sum_{s\in {K}}\Delta(\theta)_{\{s\}}$. All composition factors of $\mathbb{M}(\theta, K)$ are $E(\theta)_J$ with $J\subset I(\theta)\backslash K $.
\end{Prop}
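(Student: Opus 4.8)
The plan is to identify $\mathbb{M}(\theta,K)$ with the quotient of $\mathbb{M}(\theta)$ claimed, and then read off the composition factors from Theorem~\ref{EJ} together with the combinatorics of the submodules $\Delta(\theta)_J$. First I would construct a $\Bbbk{\bf G}$-module homomorphism $\varphi\colon\mathbb{M}(\theta)\rightarrow\mathbb{M}(\theta,K)$. Since $\mathbb{M}(\theta)=\Bbbk{\bf G}\otimes_{\Bbbk{\bf B}}\Bbbk_\theta$ is induced from ${\bf B}$ and ${\bf 1}_{\theta,K}\in\mathbb{M}(\theta,K)$ is fixed by ${\bf B}$ (indeed by ${\bf P}_K$) and transforms by $\theta$ under ${\bf T}$, Frobenius reciprocity gives a unique $\Bbbk{\bf G}$-map sending $1\otimes{\bf 1}_\theta\mapsto{\bf 1}_{\theta,K}$, i.e.\ $u\dot w{\bf 1}_\theta\mapsto u\dot w{\bf 1}_{\theta,K}$. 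This $\varphi$ is surjective because the elements $u\dot w{\bf 1}_{\theta,K}$ ($w\in Z_K$, $u\in{\bf U}_{w^{-1}}$) already span $\mathbb{M}(\theta,K)$, as recalled just above the proposition.

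Next I would compute $\ker\varphi$. For each $s=s_i\in K\subset I(\theta)$, the generator $\eta(\theta)_{\{s\}}={\bf 1}_\theta-\dot s{\bf 1}_\theta$ maps under $\varphi$ to ${\bf 1}_{\theta,K}-\dot s{\bf 1}_{\theta,K}$; but for $i\in K$ the simple reflection $s_i$ lies in ${\bf L}_K$ and acts trivially on ${\bf 1}_{\theta,K}$ (since $\theta$ kills $[{\bf L}_K,{\bf L}_K]\supseteq$ the relevant $\SL_2$, and a representative $\dot s_i$ can be chosen in ${\bf G}_i\subset[{\bf L}_K,{\bf L}_K]$), so $\dot s{\bf 1}_{\theta,K}={\bf 1}_{\theta,K}$ and hence $\eta(\theta)_{\{s\}}\in\ker\varphi$ for all $s\in K$. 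Being a $\Bbbk{\bf G}$-map, $\varphi$ then kills the submodule $\sum_{w\in W}\Bbbk{\bf U}\dot w\,\eta(\theta)_{\{s\}}=\Delta(\theta)_{\{s\}}$ for each such $s$, so $\sum_{s\in K}\Delta(\theta)_{\{s\}}\subseteq\ker\varphi$. The reverse inclusion is the crux: one must show $\varphi$ induces an \emph{isomorphism} on the quotient, equivalently that $\mathbb{M}(\theta)\big/\sum_{s\in K}\Delta(\theta)_{\{s\}}$ has the basis $\{u\dot w{\bf 1}_\theta\mid w\in Z_K,\ u\in{\bf U}_{w^{-1}}\}$ (mod the submodule). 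Following the cited arguments from \cite[Theorem 6.3]{D} and \cite[Corollary 3.8]{CD1}, this is done by using the relation ${\bf 1}_\theta\equiv\dot s_i{\bf 1}_\theta\pmod{\Delta(\theta)_{\{s_i\}}}$ together with the Bruhat-order combinatorics: whenever $w\notin Z_K$ there is $i\in K$ with $\mathscr{R}(w)\ni i$, i.e.\ $ws_i<w$, and one rewrites $u\dot w{\bf 1}_\theta$ in terms of strictly shorter Weyl group elements modulo $\sum_{s\in K}\Delta(\theta)_{\{s\}}$; an induction on $\ell(w)$ reduces every basis vector of $\mathbb{M}(\theta)$ to a $\Bbbk$-combination of the ones indexed by $Z_K$, and a dimension/linear-independence count (comparing with the known basis of $\mathbb{M}(\theta,K)$) shows no further collapse occurs. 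This gives $\mathbb{M}(\theta,K)\cong\mathbb{M}(\theta)\big/\sum_{s\in K}\Delta(\theta)_{\{s\}}$.

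For the statement about composition factors, I would argue as follows. By Theorem~\ref{EJ} the composition factors of $\mathbb{M}(\theta)$ are exactly the $E(\theta)_J$, $J\subset I(\theta)$, each with multiplicity one, and they arise from the filtration whose subquotients are the $E(\theta)_J=\Delta(\theta)_J/\Delta(\theta)_J'$. Thus it suffices to show that $\sum_{s\in K}\Delta(\theta)_{\{s\}}$ equals $\sum_{J\not\subset I(\theta)\setminus K}\Delta(\theta)_J$, i.e.\ the sum of those $\Delta(\theta)_J$ with $J\cap K\neq\emptyset$. One inclusion is immediate since $\Delta(\theta)_{\{s\}}\subseteq\Delta(\theta)_J$ whenever $s\in J$ — wait, one needs $\Delta(\theta)_{\{s\}}\supseteq\Delta(\theta)_J$ in the appropriate direction; more precisely, from the definitions $\eta(\theta)_J$ is divisible, in the group-algebra sense, by $\eta(\theta)_{\{s\}}$ for each $s\in J$, so $\Delta(\theta)_J\subseteq\Delta(\theta)_{\{s\}}$, giving $\sum_{J\cap K\neq\emptyset}\Delta(\theta)_J\subseteq\sum_{s\in K}\Delta(\theta)_{\{s\}}$. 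For the reverse, $\Delta(\theta)_{\{s\}}$ itself is one of the $\Delta(\theta)_J$ with $J=\{s\}$, $s\in K$, so it sits inside the right-hand sum trivially. Hence the two sums agree, and quotienting $\mathbb{M}(\theta)$ by it removes from the composition series exactly the factors $E(\theta)_J$ with $J\cap K\neq\emptyset$, leaving precisely the $E(\theta)_J$ with $J\subset I(\theta)\setminus K$, each still of multiplicity one. I expect the main obstacle to be the careful bookkeeping in the surjectivity-to-isomorphism step — ensuring the Bruhat-order rewriting modulo $\sum_{s\in K}\Delta(\theta)_{\{s\}}$ terminates and that no unexpected linear relations appear among the $\{u\dot w{\bf 1}_\theta\}_{w\in Z_K}$ — but this is exactly what the proofs in \cite{D} and \cite{CD1} already handle in the Borel case, and the argument transports verbatim.
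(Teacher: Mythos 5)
Your proposal is correct and follows essentially the same route as the paper, which itself gives no argument beyond citing \cite[Theorem 6.3]{D} and \cite[Corollary 3.8]{CD1}: a Frobenius-reciprocity surjection $\mathbb{M}(\theta)\twoheadrightarrow\mathbb{M}(\theta,K)$ killing $\sum_{s\in K}\Delta(\theta)_{\{s\}}$, identification of the quotient via the basis indexed by $Z_K$, and then the composition factors read off from Theorem \ref{EJ} using $\Delta(\theta)_J\subseteq\Delta(\theta)_{\{s\}}$ for $s\in J$ and multiplicity-freeness of $\mathbb{M}(\theta)$. The only cosmetic wrinkle is the self-correcting aside about the direction of the inclusion among the $\Delta(\theta)_J$'s; the direction you settle on ($\Delta(\theta)_J\subseteq\Delta(\theta)_{\{s\}}$ when $s\in J$, since $\eta(\theta)_J$ is a left multiple of $\eta(\theta)_{\{s\}}$) is the right one and the argument goes through.
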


For $J\subset I(\theta)$, set $J'=I(\theta)\backslash J$ and   $\nabla(\theta)_J= \mathbb{M}(\theta, J')=\Bbbk{\bf G}\otimes_{\Bbbk{\bf P}_{J'}}\Bbbk_\theta$.  Let $E(\theta)_J'$ be the submodule of $\nabla(\theta)_J$ generated by $$D(\theta)_J:=\sum_{w\in W_J}(-1)^{\ell(w)}\dot{w}{\bf 1}_{\theta, J'},$$
Thus we see that $E(\theta)_J'$ is isomorphic to $E(\theta)_J$ as $\Bbbk {\bf G}$-modules by \cite[Proposition 1.9]{CD3}. Therefore $E(\theta)_J$ can be regarded as the scole of  $\nabla(\theta)_J$.

\medskip

Given a finite set $X$, a matrix $(T_{K,L})$ indexed by subsets of $X$ will be said to be \textit{strictly upper triangular} (resp. \textit{strictly lower triangular})  if $T_{K,L}=0$ unless $K\subset L$ (resp. $K\supset L $), and \textit{strictly upper unitriangular} (resp. \textit{strictly lower unitriangular})   if also $T_{K,K}=1$ for all $K\subset X$.

For a fixed $\theta \in \widehat{\bf T}$ and $J\subset I(\theta)$, by Theorem \ref{EJ} we have
$[\Delta(\theta)_J]=\sum_{J\subset K} [E(\theta)_K] $. So for such $\theta\in \widehat{\bf T}$, the transition matrix $(A_{K,L})$ indexed by subsets of $I(\theta)$ from $\{[E(\theta)_J] \mid J\subset I(\theta)\}$ to $\{[\Delta(\theta)_J] \mid J\subset I(\theta)\}$ is strictly upper unitriangular. Explicitly, the transition matrix $(A_{K,L})$ is
$$A_{K,L}=\left\{
\begin{array}{ll}
1 &\ \mbox{if}~K\subset L,\\
0 &\ \mbox{otherwise.}
\end{array}\right.$$
Let $(B_{K,L})$ be the inverse matrix of $(A_{K,L})$. Then $(B_{K,L})$ is the transition matrix from $\{[\Delta(\theta)_J] \mid J\subset I(\theta)\}$ to $\{[E(\theta)_J] \mid J\subset I(\theta)\}$ which  is also strictly upper unitriangular. Moreover the  matrix $(B_{K,L})$ is
$$B_{K,L}=\left\{
\begin{array}{ll}
(-1)^{|L|-|K|} &\ \mbox{if}~K\subset L,\\
0 &\ \mbox{otherwise.}
\end{array}\right.$$

\smallskip
On the other hand, we have $[\nabla(\theta)_J]=\sum_{K\subset J} [E(\theta)_K] $. So the transition matrix $(C_{K,L})$ from $\{[E(\theta)_J] \mid J\subset I(\theta)\}$ to $\{[\nabla(\theta)_J] \mid J\subset I(\theta)\}$ is
$$C_{K,L}=\left\{
\begin{array}{ll}
1 &\ \mbox{if}~K\supset L,\\
0 &\ \mbox{otherwise,}
\end{array}\right.$$
which is strictly lower unitriangular. Let $(D_{K,L})$ be the inverse matrix of $(C_{K,L})$. Then
$$D_{K,L}=\left\{
\begin{array}{ll}
(-1)^{|K|-|L|} &\ \mbox{if}~K\supset L,\\
0 &\ \mbox{otherwise,}
\end{array}\right.$$
which is also strictly lower unitriangular.
Thus the following proposition is proved.
\begin{Prop}\label{two basis}
\noindent $(1)$ One has that $\{[E(\theta)_J] \mid \theta\in \widehat{\bf T}, J\subset I(\theta)\}$, $\{[\Delta(\theta)_J] \mid \theta\in \widehat{\bf T}, J\subset I(\theta)\}$ and $\{[\nabla(\theta)_J] \mid \theta\in \widehat{\bf T}, J\subset I(\theta)\}$ are three bases of $K_0(\mathscr{O}({\bf G}))$.

\noindent $(2)$ For a fixed  character $\theta\in \widehat{\bf T}$, the transition matrices between $\{[E(\theta)_J] \mid J\subset I(\theta)\}$ and $\{[\Delta(\theta)_J] \mid J\subset I(\theta)\}$ are given by $(A_{K,L})$ and $(B_{K,L})$.

\noindent $(3)$ For a fixed  character $\theta\in \widehat{\bf T}$, the transition matrices between $\{[E(\theta)_J] \mid J\subset I(\theta)\}$ and $\{[\nabla(\theta)_J] \mid J\subset I(\theta)\}$ are given by $(C_{K,L})$ and $(D_{K,L})$.
\end{Prop}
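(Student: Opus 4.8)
The plan is to derive the statement from Theorem \ref{EJ} and Proposition \ref{Parabolic} together with an elementary inversion over the Boolean lattice of subsets of $I(\theta)$. The only substantive input is the multiplicity-one assertion of Theorem \ref{EJ}: since every composition factor of $\mathbb{M}(\theta)$ is some $E(\theta)_K$ with $K\subseteq I(\theta)$ occurring exactly once, every subquotient of $\mathbb{M}(\theta)$ is multiplicity free, and a sum of submodules has composition factors contained in the union of the factors of the summands. Everything else is formal.

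Fix $\theta\in\widehat{\bf T}$. First I would show $[\Delta(\theta)_J]=\sum_{J\subseteq K\subseteq I(\theta)}[E(\theta)_K]$ by downward induction on $|J|$: for $J=I(\theta)$ one has $\Delta(\theta)_J'=0$, so $\Delta(\theta)_{I(\theta)}=E(\theta)_{I(\theta)}$; for general $J$, the submodule $\Delta(\theta)_J'=\sum_{J\subsetneq K}\Delta(\theta)_K$ of $\Delta(\theta)_J$ has, by the inductive hypothesis and multiplicity freeness, composition factors exactly the $E(\theta)_K$ with $K\supsetneq J$, each once, and the definition $E(\theta)_J=\Delta(\theta)_J/\Delta(\theta)_J'$ supplies the remaining factor $E(\theta)_J$. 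Thus the transition matrix from $\{[E(\theta)_J]\}$ to $\{[\Delta(\theta)_J]\}$ is $(A_{K,L})$, strictly upper unitriangular. Second, by Proposition \ref{Parabolic}, $\nabla(\theta)_J=\mathbb{M}(\theta,J')$ is the quotient of $\mathbb{M}(\theta)$ by $N=\sum_{s\in J'}\Delta(\theta)_{\{s\}}$; since the preceding step gives that $\Delta(\theta)_{\{s\}}$ has composition factors exactly the $E(\theta)_L$ with $s\in L$, the submodule $N$ of the multiplicity-free $\mathbb{M}(\theta)$ has composition factors exactly the $E(\theta)_L$ with $L\cap J'\neq\emptyset$, each once, so $\nabla(\theta)_J$ has composition factors exactly the $E(\theta)_L$ with $L\subseteq J$, each once, and $[\nabla(\theta)_J]=\sum_{L\subseteq J}[E(\theta)_L]$, i.e. the transition matrix from $\{[E(\theta)_J]\}$ to $\{[\nabla(\theta)_J]\}$ is $(C_{K,L})$, strictly lower unitriangular.

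It remains to invert and globalise. For $K\subseteq M$, $\sum_L A_{K,L}B_{L,M}=\sum_{K\subseteq L\subseteq M}(-1)^{|M|-|L|}=(-1)^{|M|-|K|}\sum_{T\subseteq M\setminus K}(-1)^{|T|}=\delta_{K,M}$ by the identity $\sum_{j=0}^{n}\binom{n}{j}(-1)^{j}=\delta_{n,0}$ with $n=|M\setminus K|$; for $K\not\subseteq M$ the sum is empty, hence $0$. So $(B_{K,L})=(A_{K,L})^{-1}$, and substituting $I(\theta)\setminus K$ for $K$ everywhere turns $(C_{K,L})$ into $(A_{K,L})$ and $(D_{K,L})$ into $(B_{K,L})$, so also $(D_{K,L})=(C_{K,L})^{-1}$; this is parts (2) and (3). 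For (1): $\{[E(\theta)_J]\mid\theta\in\widehat{\bf T},\,J\subseteq I(\theta)\}$ is a basis of $K_0(\mathscr{O}({\bf G}))$ by the definition of $\mathscr{O}({\bf G})$, and because each $[\Delta(\theta)_J]$ (resp. $[\nabla(\theta)_J]$) is a $\mathbb{Z}$-combination of the $[E(\theta)_K]$ with the same $\theta$, the transition matrix from the $E$-family to the $\Delta$-family (resp. the $\nabla$-family) is block diagonal over $\theta\in\widehat{\bf T}$ with each block $(A_{K,L})$ (resp. $(C_{K,L})$), invertible over $\mathbb{Z}$ by unitriangularity; hence both families are $\mathbb{Z}$-bases of $K_0(\mathscr{O}({\bf G}))$ too.

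The step I expect to need the most care is the composition-factor bookkeeping in the middle paragraph — in particular the facts (implicit already in the definitions of $E(\theta)_J$ and $\nabla(\theta)_J$, and provable as in \cite{CD3}) that $\Delta(\theta)_K\subseteq\Delta(\theta)_J$ for $K\supseteq J$ and that $N=\sum_{s\in J'}\Delta(\theta)_{\{s\}}$ has no composition factors beyond those forced by the individual $\Delta(\theta)_{\{s\}}$. Once multiplicity freeness is in hand from Theorem \ref{EJ}, the remainder is just the binomial identity and the block-diagonality observation.
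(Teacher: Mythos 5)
Your proposal is correct and follows essentially the same route as the paper: establish $[\Delta(\theta)_J]=\sum_{J\subset K}[E(\theta)_K]$ and $[\nabla(\theta)_J]=\sum_{K\subset J}[E(\theta)_K]$ from Theorem \ref{EJ} (and Proposition \ref{Parabolic}), then invert the unitriangular matrices over the Boolean lattice and use block-diagonality over $\theta$. You merely supply the multiplicity-free bookkeeping that the paper leaves implicit.
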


\section{Harish-Chandra Induction and Restriction}
In this section, we firstly generalize the Harish-Chandra induction and restriction to the representations of infinite reductive groups. Then we show that these two functors are well defined on the principal representation category.  As before ${\bf G}$ is a connected reductive group defined over $\mathbb{F}_q$.
Let ${\bf P}$ be a parabolic subgroup  and let ${\bf L}$ a Levi subgroup with Levi decomposition ${\bf P} = {\bf L} {\bf V}$. By convention, $\Bbbk [{\bf G}/{\bf V}]$ is the $\Bbbk$-vector space with basis ${\bf G}/{\bf V}$ on which ${\bf G}$ (resp. $\bf L$) acts by left (resp. right) translations on the basis vectors. This is well defined as $\bf L$ normalizes $\bf V$ and then $\Bbbk [{\bf G}/{\bf V}]$ is a $(\Bbbk {\bf G} , \Bbbk {\bf L})$-bimodule. Similarly, $\Bbbk [{\bf V}\backslash {\bf G}]$ with the action of ${\bf G}$ (resp. ${\bf L}$) given by right (resp. left) translations is a $(\Bbbk {\bf L}, \Bbbk {\bf G})$-bimodule.

We define the Harish-Chandra induction:
$$ \mathcal{R}^{{\bf G}}_{{\bf L}\subset {\bf P}}: \ \ \Bbbk {\bf L}\text{-Mod} \longrightarrow \Bbbk {\bf G}\text{-Mod} \ \ \ \ \  N\longrightarrow \Bbbk [{\bf G}/{\bf V}]\otimes_{\Bbbk {\bf L}} N$$
and the Harish-Chandra restriction:
$$ {}^*\mathcal{R}^{{\bf G}}_{{\bf L}\subset {\bf P}}: \ \ \Bbbk {\bf G}\text{-Mod} \longrightarrow \Bbbk {\bf L}\text{-Mod} \ \ \ \ \  M \longrightarrow \Bbbk [{\bf V}\backslash {\bf G}] \otimes_{\Bbbk {\bf G}} M.$$
This construction is formally an extension of the ordinary Harish-Chandra induction and restriction of finite groups with BN-pairs.

\begin{Prop}\label{functor} The functors $\mathcal{R}^{{\bf G}}_{{\bf L}\subset {\bf P}}$ and ${}^*\mathcal{R}^{{\bf G}}_{{\bf L}\subset {\bf P}}$  are exact.
\end{Prop}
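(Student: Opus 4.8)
The plan is to reduce the exactness of both functors to the fact that $\Bbbk[{\bf G}/{\bf V}]$ (resp. $\Bbbk[{\bf V}\backslash{\bf G}]$) is free, hence flat, as a right $\Bbbk{\bf L}$-module (resp. left $\Bbbk{\bf G}$-module), so that tensoring by it is exact. For $\mathcal{R}^{{\bf G}}_{{\bf L}\subset{\bf P}}$, I would first observe that $\Bbbk[{\bf G}/{\bf V}]$, as a right $\Bbbk{\bf L}$-module, decomposes as $\bigoplus_{g{\bf V}}\Bbbk\cdot g{\bf V}$ over a set of representatives of the ${\bf L}$-orbits on ${\bf G}/{\bf V}$ under right translation. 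Each such orbit is free: if $g{\bf V}\ell = g{\bf V}$ for $\ell\in{\bf L}$, then $\ell\in g^{-1}{\bf V}g\cap{\bf L}$; but ${\bf L}\cap{\bf V}=\{1\}$ (the Levi decomposition ${\bf P}={\bf L}{\bf V}$ is semidirect) and, since ${\bf V}$ is normal in ${\bf P}$ but we need this intersection for a general conjugate $g^{-1}{\bf V}g$, it is cleaner to use the Bruhat-type decomposition: ${\bf G}=\bigsqcup_{w}{\bf P}\dot w{\bf P}$ and pass to a description of ${\bf G}/{\bf V}$ via coset representatives of the form (unipotent part)$\cdot\dot w\cdot{\bf L}$, so that $\Bbbk[{\bf G}/{\bf V}]\cong\bigoplus\Bbbk{\bf L}$ as a right $\Bbbk{\bf L}$-module. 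Concretely, picking for each left coset $g{\bf P}$ a representative and using ${\bf P}/{\bf V}\cong{\bf L}$, one gets $\Bbbk[{\bf G}/{\bf V}]\cong\Bbbk[{\bf G}/{\bf P}]\otimes_\Bbbk\Bbbk{\bf L}$ as right $\Bbbk{\bf L}$-modules, which is manifestly free.

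Once freeness (equivalently flatness) of $\Bbbk[{\bf G}/{\bf V}]$ over $\Bbbk{\bf L}$ is established, exactness of $\mathcal{R}^{{\bf G}}_{{\bf L}\subset{\bf P}}=\Bbbk[{\bf G}/{\bf V}]\otimes_{\Bbbk{\bf L}}-$ is immediate: a flat module tensored with a short exact sequence of $\Bbbk{\bf L}$-modules yields a short exact sequence of abelian groups, and the ${\bf G}$-action is preserved, so we get a short exact sequence of $\Bbbk{\bf G}$-modules. The argument for ${}^*\mathcal{R}^{{\bf G}}_{{\bf L}\subset{\bf P}}=\Bbbk[{\bf V}\backslash{\bf G}]\otimes_{\Bbbk{\bf G}}-$ is the mirror image: I would show $\Bbbk[{\bf V}\backslash{\bf G}]$ is free as a left $\Bbbk{\bf G}$-module, by choosing a set of representatives for the ${\bf G}$-orbits (under right translation) on ${\bf V}\backslash{\bf G}$ and checking each orbit is free, i.e. that ${\bf V}g\cdot{\bf G}$ is a copy of ${\bf G}$ — but since ${\bf G}$ acts simply transitively on itself by right translation and ${\bf V}g$ runs over ${\bf V}\backslash{\bf G}$, the space $\Bbbk[{\bf V}\backslash{\bf G}]$ is simply a direct sum of $[{\bf V}:{\bf 1}]$-many... more precisely one uses that $\Bbbk[{\bf V}\backslash{\bf G}]$ has $\Bbbk$-basis ${\bf V}\backslash{\bf G}$ permuted freely and transitively within each orbit of the right ${\bf G}$-action, and each orbit is all of ${\bf V}\backslash{\bf G}$, which as a right ${\bf G}$-set is $\cong{\bf V}\backslash{\bf G}$ — freeness then follows because the stabilizer of a point ${\bf V}g$ is $g^{-1}{\bf V}g\cap\cdots$; here it is genuinely simplest to note ${\bf V}\backslash{\bf G}$ as a right ${\bf G}$-set is just the quotient, and $\Bbbk[{\bf V}\backslash{\bf G}]$ is a quotient of $\Bbbk{\bf G}$ only when ${\bf V}$ is trivial, so instead I would directly decompose $\Bbbk[{\bf V}\backslash{\bf G}]\cong\Bbbk[{\bf V}\backslash{\bf P}]\otimes_\Bbbk\Bbbk{\bf G}$...

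Let me streamline: the cleanest route, and the one I would actually write, is to note that ${\bf G}=\bigsqcup_i {\bf V}g_i{\bf L}$ is impossible in general, so instead use that ${\bf G}$ is a free right ${\bf V}$-set (translation action), giving ${\bf G}/{\bf V}$ a well-defined meaning and, choosing a section, $\Bbbk{\bf G}\cong\Bbbk[{\bf G}/{\bf V}]\otimes_\Bbbk\Bbbk{\bf V}$ as right $\Bbbk{\bf V}$-modules and as left $\Bbbk{\bf G}$-modules on the left factor; dually ${\bf G}$ is a free left ${\bf V}$-set so $\Bbbk{\bf G}\cong\Bbbk{\bf V}\otimes_\Bbbk\Bbbk[{\bf V}\backslash{\bf G}]$ as left $\Bbbk{\bf V}$-modules. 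From the right-module decomposition of $\Bbbk[{\bf G}/{\bf V}]$: as a right $\Bbbk{\bf L}$-module it is $\bigoplus_{g{\bf P}}g{\bf P}/{\bf V}$ summed over left ${\bf P}$-cosets, and ${\bf P}/{\bf V}\cong{\bf L}$ as a right $\Bbbk{\bf L}$-module (free of rank one), so $\Bbbk[{\bf G}/{\bf V}]$ is $\Bbbk{\bf L}$-free; symmetrically $\Bbbk[{\bf V}\backslash{\bf G}]$ is free as a left $\Bbbk{\bf G}$-module? No — it is free as a \emph{right} $\Bbbk{\bf L}$-module, and for exactness of ${}^*\mathcal{R}$ what I need is flatness as a \emph{left} $\Bbbk{\bf G}$-module, which fails. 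So the correct statement for the restriction functor is that $-\otimes_{\Bbbk{\bf G}}M$ with $M$ fixed is exact because $\Bbbk[{\bf V}\backslash{\bf G}]$ is flat as a \emph{right} $\Bbbk{\bf G}$-module — and indeed $\Bbbk[{\bf V}\backslash{\bf G}]$ as a right $\Bbbk{\bf G}$-module is free (it is $\bigoplus$ over ${\bf V}$-orbits, each orbit $\cong{\bf G}$), hence the functor $M\mapsto\Bbbk[{\bf V}\backslash{\bf G}]\otimes_{\Bbbk{\bf G}}M$ is exact. The main (really the only) obstacle is getting these freeness statements stated with the correct handedness; once that bookkeeping is pinned down, exactness is a one-line consequence of flatness. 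This I would carry out by: (1) fixing the free ${\bf V}$-action bookkeeping; (2) proving $\Bbbk[{\bf G}/{\bf V}]$ is $\Bbbk{\bf L}$-free on the right and $\Bbbk[{\bf V}\backslash{\bf G}]$ is $\Bbbk{\bf G}$-free on the right; (3) concluding exactness of both functors from flatness.
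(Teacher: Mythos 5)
Your treatment of the induction functor is correct and is essentially the paper's argument: you decompose ${\bf G}/{\bf V}$ into right ${\bf L}$-orbits, which are exactly the fibres of ${\bf G}/{\bf V}\to{\bf G}/{\bf P}$, each isomorphic to ${\bf P}/{\bf V}\cong{\bf L}$ as a free right ${\bf L}$-set (the stabilizer of $g{\bf V}$ in ${\bf L}$ is ${\bf L}\cap{\bf V}=\{1\}$), so $\Bbbk[{\bf G}/{\bf V}]$ is free over $\Bbbk{\bf L}$ and tensoring with it is exact. The paper phrases this as the factorization $\Bbbk{\bf G}\otimes_{\Bbbk{\bf P}}\Bbbk[{\bf P}/{\bf V}]\otimes_{\Bbbk{\bf L}}-$ together with $\Bbbk[{\bf P}/{\bf V}]\cong\Bbbk{\bf L}$ and freeness of $\Bbbk{\bf G}$ over $\Bbbk{\bf P}$; same content.

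The restriction half, however, ends on a false claim. You assert that $\Bbbk[{\bf V}\backslash{\bf G}]$ is free as a right $\Bbbk{\bf G}$-module because it is a direct sum over ${\bf V}$-orbits, ``each orbit $\cong{\bf G}$.'' The left ${\bf V}$-orbits on ${\bf G}$ are the cosets ${\bf V}g$; they are the \emph{basis vectors} of $\Bbbk[{\bf V}\backslash{\bf G}]$, each in bijection with ${\bf V}$, not with ${\bf G}$. The right ${\bf G}$-action on the set ${\bf V}\backslash{\bf G}$ is \emph{transitive}, and the stabilizer of ${\bf V}x$ is $x^{-1}{\bf V}x$, which is infinite; so ${\bf V}\backslash{\bf G}$ is a single non-free orbit and $\Bbbk[{\bf V}\backslash{\bf G}]$ is neither free nor projective over $\Bbbk{\bf G}$. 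Indeed $\Bbbk[{\bf V}\backslash{\bf G}]\cong\Bbbk\otimes_{\Bbbk{\bf V}}\Bbbk{\bf G}$ as right $\Bbbk{\bf G}$-modules, whence ${}^*\mathcal{R}^{{\bf G}}_{{\bf L}\subset{\bf P}}(M)\cong\Bbbk\otimes_{\Bbbk{\bf V}}M=M_{\bf V}$ is the module of ${\bf V}$-coinvariants; were your freeness claim true, the functor would be a direct sum of copies of the forgetful functor, which Harish--Chandra restriction visibly is not. What you actually need is flatness of the trivial right $\Bbbk{\bf V}$-module, equivalently exactness of ${\bf V}$-coinvariants. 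This holds here because ${\bf V}=\bigcup_a V_{q^a}$ is a locally finite $p$-group: when $\operatorname{char}\Bbbk\neq p$ each $\Bbbk V_{q^a}$ is semisimple, so $(-)_{V_{q^a}}$ is exact, and $(-)_{\bf V}$ is the (exact) direct limit of these functors; equivalently, $\Bbbk=\varinjlim_a\Bbbk{\bf V}e_a$ with $e_a$ the averaging idempotent of $V_{q^a}$ is a direct limit of projective, hence flat, modules. (The paper itself dismisses this half as ``obviously exact''; the coinvariants argument is the honest justification, and it is where the characteristic hypothesis enters.)
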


\begin{proof}  The restriction functor ${}^*\mathcal{R}^{{\bf G}}_{{\bf L}\subset {\bf P}}$ is obviously exact. The  induction functor  $\mathcal{R}^{{\bf G}}_{{\bf L}\subset {\bf P}}$, it is the same as the functor $\Bbbk {\bf G}\otimes_{\Bbbk {\bf P}}\Bbbk [{\bf P}/{\bf V}]\otimes_{\Bbbk {\bf L}}-$. Since $\Bbbk [{\bf P}/{\bf V}]\cong \Bbbk {\bf L}$ as right $\Bbbk {\bf L}$-modules and $\Bbbk {\bf G}$ is a free $\Bbbk {\bf P}$-module, we see that $\Bbbk {\bf G}\otimes_{\Bbbk {\bf P}}\Bbbk [{\bf P}/{\bf V}]\otimes_{\Bbbk {\bf L}}-$ is exact.

\end{proof}

\begin{Rem}
In the case of finite reductive groups, the Harish-Chandra restriction is right adjoint and also left adjoint to the Harish-Chandra induction  when $\op{char}\Bbbk\neq \op{char} \bar{\mathbb{F}}_q$. However this is not true in general for $\mathcal{R}^{{\bf G}}_{{\bf L}\subset {\bf P}}$ and ${}^*\mathcal{R}^{{\bf G}}_{{\bf L}\subset {\bf P}}$ in the case of infinite reductive groups. For example, we take ${\bf G}=SL_2(\bar{\mathbb{F}}_q)$, then
$$\op{Hom}_{\bf G}(\mathbb{M}(\op{tr}), \mathcal{R}^{{\bf G}}_{{\bf T}\subset {\bf B} }\op{tr})\ncong \op{Hom}_{\bf T}(\mathbb{M}(\op{tr}), \op{tr}).$$
Indeed we have $$\op{Hom}_{\bf G}(\mathbb{M}(\op{tr}), \mathcal{R}^{{\bf G}}_{{\bf T}\subset {\bf B} }\op{tr})=\op{End}_{\bf G}(\mathbb{M}(\op{tr})) \cong \op{Hom}_{\bf B}(\op{tr}, \mathbb{M}(\op{tr})) \cong \Bbbk.$$
However as $\Bbbk {\bf G}$ -module, $\mathbb{M}(\op{tr})$ has two composition factors: the infinite Steinberg module $\text{St}$ and the trivial module $\Bbbk_{\op{tr}}$. However, as  $\Bbbk {\bf T}$-module, each one has a simple quotient of trivial module, so we see that $\dim_{\Bbbk}\op{Hom}_{\bf T}(\mathbb{M}(\op{tr}), \op{tr})\geq 2.$

\end{Rem}

For $J\subset I$, there is a standard parabolic subgroup ${\bf P}_J$ with Levi decomposition ${\bf P}_J={\bf U}_J\rtimes {\bf L}_J$. Then we simply denote Harish-Chandra induction by $ \mathcal{R}^{J}:= \mathcal{R}^{ {\bf G}}_{ {\bf L}_J\subset {\bf P}_J}$ and Harish-Chandra restriction by $\mathcal{R}_{J}:= {}^*\mathcal{R}^{ {\bf G}}_{ {\bf L}_J\subset {\bf P}_J}$.
With this notation, the induced module $\mathbb{M}(\theta)$ is just $\mathcal{R}^{\emptyset}({\bf \Bbbk}_\theta)$ when $J$ is empty.

\medskip

Similar to the construction of $\mathscr{O}({\bf G})$, we  introduce the principal representation category $\mathscr{O}({\bf L}_J)$ of ${\bf L}_J$.
Let $\mathbb{M}_J(\theta)=\op{Ind}_{\Bbbk{\bf B}_J}^{\Bbbk{\bf L}_J}{\bf \Bbbk}_\theta$ be the standard induced module of ${\bf L}_J$, where ${\bf B}_J= {\bf U}_{w_J}\rtimes {\bf T}$ is a Borel subgroup of ${\bf L}_J$. Set $J(\theta)=J\cap I(\theta)$ and for $K\subset J(\theta)$, set
$$\mathbb{M}_J(\theta)_K=\displaystyle \sum_{w\in W_J}\Bbbk {\bf U}_{w_J}\dot{w}\eta(\theta)_K,$$
which is a submodule of $\mathbb{M}_J(\theta)$. For any $K\subset J(\theta)$, we define
$$E_J(\theta)_K=\mathbb{M}_J(\theta)_K / \mathbb{M}_J(\theta)_K',$$
where $\mathbb{M}_J(\theta)_K'$ is the sum of all $\mathbb{M}_J(\theta)_K$ with $J\subsetneq K\subset J(\theta)$. Analogous to the proof of Theorem \ref{EJ}(see \cite{CD3}), $E_J(\theta)_K$ is also irreducible and pairwise non-isomorphic. Let $\mathscr{O}({\bf L}_J)$ be the full subcategory of $\Bbbk {\bf L}_J$-modules containing objects whose subquotients are $E_J(\theta)_K$.

\medskip

Now for $J,K\subset I$, let $\{w_1,w_2,\cdots,w_m\}$ be a complete set of the representatives (with minimal length) of $(W_J,W_K)$-double cosets in $W$. We set $L_j=J\cap w_j(K)$. Then we have the following lemma.

\begin{Lem}\label{lemma1} Assume that $K\subset I(\theta)$. Then
$$\mathcal{R}_J(\Delta(\theta)_K) \cong \bigoplus_{j=1}^m \mathbb{M}_J(\theta^{w_j})_{L_j}$$
as $\Bbbk{\bf L}_J$-module.

\end{Lem}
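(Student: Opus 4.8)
The plan is to compute the Harish-Chandra restriction $\mathcal{R}_J(\Delta(\theta)_K) = \Bbbk[{\bf U}_J\backslash{\bf G}]\otimes_{\Bbbk{\bf G}}\Delta(\theta)_K$ by working with the explicit basis of $\mathbb{M}(\theta)$ and the submodule $\Delta(\theta)_K = \sum_{w\in W}\Bbbk{\bf U}\dot w\,\eta(\theta)_K$. First I would recall that tensoring with $\Bbbk[{\bf U}_J\backslash{\bf G}]$ over $\Bbbk{\bf G}$ simply means passing to ${\bf U}_J$-coinvariants, so $\mathcal{R}_J(M) = M_{{\bf U}_J}$ as a $\Bbbk{\bf L}_J$-module for any $\Bbbk{\bf G}$-module $M$. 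The key combinatorial input is the decomposition of ${\bf G}$ into ${\bf U}_J$-${\bf B}$ double cosets refined by the double coset representatives $w_1,\dots,w_m$ of $(W_J,W_K)$ in $W$: using the Bruhat decomposition together with the fact that $\Delta(\theta)_K$ is spanned by the vectors $u\dot w\,\eta(\theta)_K$, I would organize these spanning vectors according to which $(W_J,W_K)$-double coset (equivalently, which ${\bf P}_J$-${\bf P}_K$ double coset in ${\bf G}$, via $w\in W_J\,w_j\,W_K$) they lie in. This gives a ${\bf U}_J$-stable (indeed ${\bf L}_J$-stable after passing to coinvariants) filtration or direct sum decomposition of $\Delta(\theta)_K$ indexed by $j=1,\dots,m$.

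Next, for each $j$ I would identify the $j$-th piece. On the double coset indexed by $w_j$, the relevant spanning vectors are of the form $u\dot w\dot w_j\dot v\,\eta(\theta)_K$ with $w\in W_J$, $v\in W_K$; since $\eta(\theta)_K$ is (up to sign) $W_K$-alternating, the $\dot v$ can be absorbed, and the surviving data is a sum indexed by $W_J$ of terms $\Bbbk{\bf U}_{w_J}\dot w\cdot(\dot w_j\,\eta(\theta)_K)$. The element $\dot w_j\,\eta(\theta)_K$ transforms under $\dot w_j{\bf T}\dot w_j^{-1}$-conjugation by the twisted character $\theta^{w_j}$, and the condition $K\subset I(\theta)$ translates into $L_j = J\cap w_j(K)\subset J(\theta^{w_j})$, which is exactly what is needed for $\eta(\theta^{w_j})_{L_j}$ to be defined inside $\mathbb{M}_J(\theta^{w_j})$. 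Matching $\sum_{w\in W_J}\Bbbk{\bf U}_{w_J}\dot w\,\eta(\theta^{w_j})_{L_j}$ with the definition of $\mathbb{M}_J(\theta^{w_j})_{L_j}$ gives the $j$-th summand; here I would invoke the parabolic analogue of the Bruhat-type basis (as in \cite[Lemma 6.2]{D}, already cited in the excerpt) to see that the spanning set is actually a basis, so the identification is an isomorphism of $\Bbbk{\bf L}_J$-modules and not merely a surjection.

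Finally I would check that the passage to ${\bf U}_J$-coinvariants kills exactly the off-diagonal interaction between different $w_j$'s — that is, that the images of the $j$-th pieces in $\Delta(\theta)_K/[\text{something}]$ are linearly independent and span — so that the filtration splits as the asserted direct sum $\bigoplus_{j=1}^m\mathbb{M}_J(\theta^{w_j})_{L_j}$. I expect the main obstacle to be precisely this bookkeeping step: verifying that taking ${\bf U}_J$-coinvariants of the Bruhat-type basis of $\Delta(\theta)_K$ produces a clean basis of the restriction indexed compatibly with the double cosets, with no unexpected cancellation or collapse among cosets. This requires a careful analysis of how ${\bf U}_J$ acts on the vectors $u\dot w\dot w_j\,\eta(\theta)_K$ — using the factorization ${\bf U} = {\bf U}_w\times{\bf U}_w'$ and the interplay of ${\bf U}_J$ with $\Phi^+\setminus\Phi_J$ versus $\Phi_J$ — and is the place where the length-minimality of the $w_j$ and the structure of $L_j = J\cap w_j(K)$ genuinely enter. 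The representation-theoretic content is otherwise a routine transport of the finite-group Harish-Chandra/Mackey computation through the direct limit ${\bf G} = \bigcup_a G_{q^a}$, using exactness of $\mathcal{R}_J$ from Proposition \ref{functor}.
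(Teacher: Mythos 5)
Your proposal is correct and follows essentially the same route as the paper: the paper also decomposes $\mathcal{R}_J(\Delta(\theta)_K)=\Bbbk[{\bf U}_J\backslash{\bf G}]\otimes_{\Bbbk{\bf G}}\Delta(\theta)_K$ according to the $(W_J,W_K)$-double cosets, identifies the $j$-th piece as the $\Bbbk{\bf L}_J$-module generated by ${\bf U}_J\otimes w_j\eta(\theta)_K$, and matches it with $\mathbb{M}_J(\theta^{w_j})_{L_j}$ via the generator map $\eta(\theta^{w_j})_{L_j}\mapsto{\bf U}_J\otimes w_j\eta(\theta)_K$, using $W_{L_j}=W_J\cap{}^{w_j}W_K$ and $\Phi^+_{L_j}=\Phi^+_J\cap w_j(\Phi^+_K)$. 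The verification you flag as the main obstacle is exactly the step the paper compresses into ``it is not difficult to verify.''
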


\begin{proof}
Since $\Delta(\theta)_K=\sum_{w\in X_K}\Bbbk {\bf U}\dot{w}\eta(\theta)_K$,
where $$X_K =\{x\in W\mid x~\text{has~minimal~length~in}~xW_K\},$$
we have
$$
\aligned\mathcal{R}_J(\Delta(\theta)_K) &\ = \Bbbk [{\bf U}_J\backslash {\bf G}] \otimes_{\Bbbk {\bf G}} \Delta(\theta)_K \\
&\ \cong\bigoplus_{j=1}^m \Bbbk{\bf L}_J({\bf U}_J\otimes w_j \eta(\theta)_K ),
\endaligned
$$
where $\Bbbk{\bf L}_J({\bf U}_J\otimes w_j \eta(\theta)_K )=:\Lambda_j $ is the $\Bbbk {\bf L}_J$-module generated by the element ${\bf U}_J\otimes w_j \eta(\theta)_K $.

By the setting  of $w_j$ and $L_j=J\cap w_j(K)$, we have $W_{L_j}=W_J\cap \ ^{w_j}W_K $ and $\Phi_{L_j}^{+}= \Phi_{J}^{+}\cap w_j(\Phi_{K}^{+})$ by  Theorem 2.6 in \cite{CE}.
So it is not difficult to verify that
$$f: \ \mathbb{M}_J(\theta^{w_j})_{L_j} \longrightarrow \Lambda_j,\ \ \eta(\theta^{w_j})_{L_j}\longrightarrow {\bf U}_J\otimes w_j \eta(\theta)_K$$
is a $\Bbbk {\bf L}_J$-module homomorphism. Moreover it is also an isomorphism which completes the proof.
\end{proof}

On the other hand we consider $\mathcal{R}^J(\mathbb{M}_J(\theta)_K)$ and the following lemma is easy to check.
\begin{Lem} \label{lemma2}For $J\subset I$ and $K\subset J(\theta)$, one has
$$\mathcal{R}^J(\mathbb{M}_J(\theta)_K)\cong \Delta(\theta)_K$$
as $\Bbbk {\bf G}$-modules.
\end{Lem}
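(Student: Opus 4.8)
The plan is to unravel both sides of the claimed isomorphism $\mathcal{R}^J(\mathbb{M}_J(\theta)_K)\cong \Delta(\theta)_K$ in terms of their explicit bases and exhibit a $\Bbbk{\bf G}$-module map matching the two. First I would recall that $\mathcal{R}^J(N)=\Bbbk[{\bf G}/{\bf U}_J]\otimes_{\Bbbk{\bf L}_J}N$, and that for a left $\Bbbk{\bf L}_J$-module $N$ this is canonically identified with $\Bbbk{\bf G}\otimes_{\Bbbk{\bf P}_J}N$, where ${\bf P}_J$ acts on $N$ by inflation through ${\bf P}_J\twoheadrightarrow{\bf L}_J$. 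So the task reduces to proving $\Bbbk{\bf G}\otimes_{\Bbbk{\bf P}_J}\mathbb{M}_J(\theta)_K\cong\Delta(\theta)_K$. On the right side, recall $\Delta(\theta)_K=\sum_{w\in X_K}\Bbbk{\bf U}\dot w\,\eta(\theta)_K\subset\mathbb{M}(\theta)$, and on the left side $\mathbb{M}_J(\theta)_K=\sum_{w\in W_J}\Bbbk{\bf U}_{w_J}\dot w\,\eta(\theta)_K\subset\mathbb{M}_J(\theta)$, itself a submodule of $\mathbb{M}_J(\theta)=\op{Ind}_{\Bbbk{\bf B}_J}^{\Bbbk{\bf L}_J}\Bbbk_\theta$.

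Next I would define the map. Since $\mathbb{M}_J(\theta)=\Bbbk{\bf L}_J\otimes_{\Bbbk{\bf B}_J}\Bbbk_\theta$ and ${\bf B}_J\subset{\bf P}_J$ with ${\bf P}_J={\bf B}\cdot{\bf L}_J$, there is a natural surjective $\Bbbk{\bf G}$-homomorphism $\varphi\colon\Bbbk{\bf G}\otimes_{\Bbbk{\bf P}_J}\mathbb{M}_J(\theta)\to\Bbbk{\bf G}\otimes_{\Bbbk{\bf B}}\Bbbk_\theta=\mathbb{M}(\theta)$, $g\otimes(l\otimes{\bf 1}_\theta)\mapsto gl\,{\bf 1}_\theta$; this uses that the inflated ${\bf P}_J$-action on $\mathbb{M}_J(\theta)$ is compatible, since ${\bf U}_J$ (the kernel of ${\bf P}_J\to{\bf L}_J$) maps $g\otimes x$ to $gu\otimes x = g\otimes ux$ and $u$ acts trivially — one must check this lands correctly. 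Actually it is cleaner to build $\varphi$ the other way: I would show the assignment $\eta(\theta^{\mathrm{id}})$... more precisely, define $f\colon\Bbbk{\bf G}\otimes_{\Bbbk{\bf P}_J}\mathbb{M}_J(\theta)_K\to\Delta(\theta)_K$ by sending the generator $1\otimes\eta(\theta)_K$ to $\eta(\theta)_K\in\Delta(\theta)_K$ and extending ${\bf G}$-linearly, and check it is well defined because ${\bf U}_J$ fixes $\eta(\theta)_K$ inside $\mathbb{M}(\theta)$ (indeed ${\bf U}_J\subset{\bf U}$ and ${\bf U}$ fixes each $w{\bf 1}_\theta$ for $w\in W_K$ up to the Bruhat combinatorics — here $\eta(\theta)_K=\sum_{w\in W_K}(-1)^{\ell(w)}\dot w{\bf 1}_\theta$ and $\Delta(\theta)_K$ contains ${\bf U}\dot w\eta(\theta)_K$, so ${\bf U}_J$ acting on the left of $1\otimes\eta(\theta)_K$ corresponds to ${\bf U}_J$ acting on $\eta(\theta)_K$, which one verifies is trivial since $\eta(\theta)_K$ already lies in $\mathbb{M}_J(\theta)_K$ where it is ${\bf U}_{w_J}$-fixed and ${\bf U}_J\cdot{\bf U}_{w_J}$-type decompositions give stability). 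Then surjectivity is immediate since $\Delta(\theta)_K$ is generated as a $\Bbbk{\bf G}$-module by $\eta(\theta)_K$.

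For injectivity, which I expect to be the main obstacle, I would compare bases. From the formula $\mathbb{M}_J(\theta)_K=\sum_{w\in W_J}\Bbbk{\bf U}_{w_J}\dot w\,\eta(\theta)_K$ and the analogue of the basis statement for $\mathbb{M}_J(\theta)$ one gets that $\{u\dot w\eta(\theta)_K : w\in X_K^J,\ u\in{\bf U}_{w_J}\cap{\bf U}_{w^{-1}}\}$ (suitably indexed, with $X_K^J$ the minimal-length coset representatives of $W_J/W_K$) is a basis of $\mathbb{M}_J(\theta)_K$. Since $\Bbbk{\bf G}=\bigoplus_{g\in{\bf G}/{\bf P}_J}g\,\Bbbk{\bf P}_J$ is $\Bbbk{\bf P}_J$-free, using Bruhat ${\bf G}=\coprod_{x\in X_J}{\bf U}{\dot x}{\bf P}_J$ (with $X_J$ minimal coset reps for $W/W_J$) I would write down an explicit basis of $\Bbbk{\bf G}\otimes_{\Bbbk{\bf P}_J}\mathbb{M}_J(\theta)_K$ indexed by pairs built from $X_J$, the ${\bf U}_{x^{-1}}$-factors, and the above basis of $\mathbb{M}_J(\theta)_K$. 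The image under $f$ of such a basis element is a ${\bf U}$-translate of $\dot x\dot w\,\eta(\theta)_K$; using that $X_J\cdot X_K^J$ gives minimal-length representatives in $X_K$ (a standard fact about parabolic double cosets, the "$X_J X^J_K = X_K$" factorization) together with the bijectivity of ${\bf U}_x\times{\bf U}_x'\to{\bf U}$, these images are exactly the defining basis $\{u\dot z\,\eta(\theta)_K : z\in X_K,\ u\in{\bf U}_{z^{-1}}\}$ of $\Delta(\theta)_K$, each hit once. Hence $f$ carries a basis bijectively to a basis and is an isomorphism. The delicate points to get right are: (i) the identification $\mathcal{R}^J(N)\cong\Bbbk{\bf G}\otimes_{\Bbbk{\bf P}_J}(\text{infl }N)$ and the well-definedness of $f$ via ${\bf U}_J$-invariance of $\eta(\theta)_K$; and (ii) the length-additivity $\ell(xw)=\ell(x)+\ell(w)$ for $x\in X_J,\ w\in X_K^J$ and the matching of unipotent pieces ${\bf U}_{(xw)^{-1}}$ with ${\bf U}_{x^{-1}}$ and the ${\bf L}_J$-part of ${\bf U}_{w^{-1}}$, which is what makes the basis count come out exactly right.
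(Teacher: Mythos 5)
Your proposal is correct and follows the same line as the paper, whose entire proof is the one-sentence observation that both sides are cyclic $\Bbbk{\bf G}$-modules generated by $\eta(\theta)_K$; you simply supply the details (the identification $\mathcal{R}^J(N)\cong\Bbbk{\bf G}\otimes_{\Bbbk{\bf P}_J}N$, the map on generators, and the basis comparison via $X_K=X_J\cdot X_K^J$ with length additivity) that the paper leaves implicit. One small correction to your well-definedness step: the reason ${\bf U}_J$ fixes $\eta(\theta)_K$ is not that $\eta(\theta)_K$ is ${\bf U}_{w_J}$-fixed (it is not --- ${\bf U}_{w_J}={\bf U}\cap{\bf L}_J$ acts nontrivially on it in general), but that $W_J$ stabilizes $\Phi^+\setminus\Phi_J$, so $\dot{w}^{-1}{\bf U}_J\dot{w}={\bf U}_J\subset{\bf U}$ fixes ${\bf 1}_\theta$ for every $w\in W_J$, whence each $\dot{w}{\bf 1}_\theta$ ($w\in W_K\subset W_J$) is ${\bf U}_J$-fixed.
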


\begin{proof} Both modules on left- and right-hand sides can be regarded as $\Bbbk {\bf G}$-modules generated by the element $\eta(\theta)_K$.
\end{proof}

In particular, using Lemma \ref{lemma1} and Lemma \ref{lemma2},  we get
$$\mathcal{R}^{J}(\mathbb{M}_J(\theta))\cong \mathbb{M}(\theta)$$ as $\Bbbk {\bf G}$-modules and $$\mathcal{R}_{J}(\mathbb{M}(\theta))=\Bbbk [{\bf U}_J\backslash {\bf G}] \otimes_{\Bbbk {\bf G}}\mathbb{M}(\theta)\cong \bigoplus_{j=1}^k \mathbb{M}_J(\theta^{v_j})$$
as $\Bbbk{\bf L}_J$-module, where $\{v_1,v_2,\cdots,v_k\}$ is a complete set of the representatives (with minimal length) of the right cosets of $W_J$ in $W$. Since $\mathcal{R}^{J}$ and $\mathcal{R}_{J}$ are both exact functors, we see that
$$\mathcal{R}^{J}: \  \mathscr{O}({\bf L}_J)\longrightarrow \mathscr{O}({\bf G})\ \ \text{and} \ \ \mathcal{R}_{J}: \  \mathscr{O}({\bf G}) \longrightarrow \mathscr{O}({\bf L}_J).$$
are two well-defined functors.

\section{Alvis-Curits Duality}

By the results in the last section,
$$\mathcal{R}^{J}: \  \mathscr{O}({\bf L}_J)\longrightarrow \mathscr{O}({\bf G})\ \ \text{and} \ \ \mathcal{R}_{J}: \  \mathscr{O}({\bf G}) \longrightarrow \mathscr{O}({\bf L}_J).$$
are exact functors. Thus we can define them on the Grothendieck group with the same notation by
$$ \mathcal{R}^{J}: \ \ K_0(\mathscr{O}({\bf L}_J))\longrightarrow K_0(\mathscr{O}({\bf G})),\ \ \ [N]\longrightarrow [\mathcal{R}^{J} N],$$
$$ \mathcal{R}_{J}: \ \ K_0(\mathscr{O}({\bf G}))\longrightarrow K_0(\mathscr{O}({\bf L}_J)),\ \ \ [M]\longrightarrow [\mathcal{R}_{J} M].$$

The ordinary Alvis-Curits duality is a duality operation on the characters of finite reductive groups; see \cite{DM} for more details. Analogously we can define
$\mathbb{D}_{\bf G}: K_0(\mathscr{O}({\bf G}))\longrightarrow K_0(\mathscr{O}({\bf G}))$
by $$\mathbb{D}_{\bf G}= \sum_{J\subset I} (-1)^{|J|}\ \mathcal{R}^J  \  \mathcal{R}_J$$
and call $\mathbb{D}_{\bf G}$ the Alvis-Curits duality of infinite type on $\mathscr{O}({\bf G})$.
In the following we try to understand how $\mathbb{D}_{\bf G}$ operates on the elements in $K_0(\mathscr{O}({\bf G}))$ and then we will  show that $\mathbb{D}_{\bf G}$ is really a duality.

\medskip

By Proposition \ref{two basis}, $\{[\Delta(\theta)_K] \mid \theta\in \widehat{\bf T}, K\subset I(\theta)\}$ is a basis of $K_0(\mathscr{O}({\bf G}))$, so we need to  consider $\mathbb{D}_{\bf G}([\Delta(\theta)_K])$ for a fixed $\theta\in \widehat{\bf T}$ and $K\subset I(\theta)$. For each $J\subset I$, denote by $D_{J}$ a complete set of the representatives (with minimal length) of $(W_J,W_K)$-double cosets in $W$. For each $^J{w_j}\in D_{J}$, set $^J{L_j}=J\cap \ ^J{w_j}(K)$. Then using Lemma \ref{lemma1} and Lemma \ref{lemma2}, we have
$$\mathcal{R}^J \mathcal{R}_J(\Delta(\theta)_K) \cong \bigoplus_{^J{w_j}\in D_J} \Delta(\theta^{^J{w_j}})_{^J{L_j}}.$$
Therefore we see that
$$\mathbb{D}_{\bf G}([\Delta(\theta)_K])=\sum_{J\subset I} (-1)^{|J|} \sum_{^J{w_j}\in D_J}[\Delta(\theta^{^J{w_j}})_{^J{L_j}}].$$
For $w\in W$, we set $\mathscr{H}(w)=\{i\in I\mid s_iw> w\}$. So the previous formula can also be written as
$$\mathbb{D}_{\bf G}([\Delta(\theta)_K])= \sum_{w\in X_K} \sum_{J\subset \mathscr{H}(w)}(-1)^{|J|}
[\Delta(\theta^{w})_{J\cap w(K)}],$$
where $X_K =\{x\in W\mid x~\text{has~minimal~length~in}~xW_K\}$ as before.

We claim that $w(K)\subseteq \mathscr{H}(w)$ for $w\in X_K$. Indeed, assume $j\in w(K)$, then there exists $i \in K$ such that $w(\alpha_i)=\alpha_j$ which implies $ws_i>w$. On the other hand, $s_jw(\alpha_i)=-\alpha_j$, and thus $s_jws_i<s_jw$. Therefore we have $ws_i=s_jw>w$ and $w(K)\subseteq \mathscr{H}(w)$. Moreover $w(K)= \mathscr{H}(w)$ if and only if $w=w_0w_K$.

It is easy to check that when $w(K)\subsetneq \mathscr{H}(w)$, then
$$\sum_{J\subset \mathscr{H}(w)}(-1)^{|J|}
[\Delta(\theta^{w})_{J\cap w(K)}]=0.$$
We set $\sigma(K)=\mathscr{H}(w_0w_K)$ for $K\subset I(\theta)$.
Then $$\sigma(K)=\{i\in I\mid s_i=w_0s_jw_0 \ \text{for some} \ j\in K \}$$ which is a subset of $\sigma(I(\theta))=I(\theta^{w_0})$.
Noting that $\theta^{w_0}=\theta^{w_0w_K}$ for $K\subset I(\theta)$, we have proved  the following lemma.
\begin{Lem} \label{lemma3}For $\theta\in \widehat{\bf T}$ and $K\subset I(\theta)$, we have
$$\mathbb{D}_{\bf G}([\Delta(\theta)_K])=\sum_{J\subset \sigma(K)}(-1)^{|J|}[\Delta(\theta^{w_0})_J].$$
\end{Lem}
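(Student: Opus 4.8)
The plan is to start from the expansion
$$\mathbb{D}_{\bf G}([\Delta(\theta)_K])= \sum_{w\in X_K} \sum_{J\subset \mathscr{H}(w)}(-1)^{|J|}
[\Delta(\theta^{w})_{J\cap w(K)}]$$
that has already been derived, and to show that all terms indexed by $w\in X_K$ with $w\neq w_0w_K$ cancel among themselves (in fact each such $w$ contributes $0$ individually), leaving only the summand $w=w_0w_K$, which gives exactly $\sum_{J\subset\sigma(K)}(-1)^{|J|}[\Delta(\theta^{w_0})_J]$. First I would record the inclusion $w(K)\subseteq \mathscr{H}(w)$ for $w\in X_K$, together with the equality case $w(K)=\mathscr{H}(w)\iff w=w_0w_K$ — both already indicated in the excerpt. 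The elementary point behind the first is that if $j\in w(K)$ with $w(\alpha_i)=\alpha_j$, $i\in K$, then $ws_i>w$ while $s_jws_i<s_jw$, forcing $ws_i=s_jw>w$; the equality case comes from counting: $|\mathscr{H}(w)|=|I|-\ell(w_0w)$ and $|w(K)|=|K|$, and $\ell(w_0w)=\ell(w_0)-\ell(w)$, so equality forces $\ell(w)=\ell(w_0)-|K|=\ell(w_0w_K)$; combined with $w\in X_K$ and $w(K)\subseteq\mathscr H(w)$ this pins down $w=w_0w_K$.

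Next I would prove the vanishing claim: if $w(K)\subsetneq\mathscr{H}(w)$ then $\sum_{J\subset\mathscr{H}(w)}(-1)^{|J|}[\Delta(\theta^w)_{J\cap w(K)}]=0$. The mechanism is a sign-reversing involution. Pick an element $t\in\mathscr{H}(w)\setminus w(K)$; pairing $J\mapsto J\triangle\{t\}$ (symmetric difference) is a fixed-point-free involution on the subsets of $\mathscr{H}(w)$ which changes $(-1)^{|J|}$ by a sign but leaves $J\cap w(K)$ unchanged, since $t\notin w(K)$. Hence the terms cancel in pairs. Equivalently one factors the sum as $\big(\sum_{J'\subset w(K)}(-1)^{|J'|}[\Delta(\theta^w)_{J'}]\big)\cdot\big(\sum_{J''\subset\mathscr{H}(w)\setminus w(K)}(-1)^{|J''|}\big)$ and the second factor is $0$ because $\mathscr{H}(w)\setminus w(K)\neq\emptyset$. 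This is the only genuinely combinatorial step and I expect it to be completely routine; there is no real obstacle here.

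Finally I would assemble the surviving term. For $w=w_0w_K$ one has $\mathscr{H}(w)=\sigma(K)$ by definition of $\sigma$, and $w(K)=w_0w_K(K)=w_0(K)$; since $w_K$ permutes $\Phi_K$ one checks $w_0w_K(\Delta_K)=w_0(\Delta_K)$, and $w_0$ sends $\Delta_K$ into $-\Delta_{w_0(K)}$, so after the standard identification $w(K)=\sigma(K)$ as subsets of $I$ — consistent with the equality case above. Also $\theta^{w}=\theta^{w_0w_K}=\theta^{w_0}$ because $\theta$ is $W_K$-invariant for $K\subset I(\theta)$ (each $s_i$, $i\in K$, fixes $\theta$ as $\theta|_{{\bf T}_i}$ is trivial), giving $J\cap w(K)=J$ for $J\subset\sigma(K)$. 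Substituting, the $w=w_0w_K$ summand is precisely $\sum_{J\subset\sigma(K)}(-1)^{|J|}[\Delta(\theta^{w_0})_J]$, and together with the vanishing of all other summands this proves the lemma. The one place to be slightly careful is the bookkeeping identifying $w_0w_K(K)$ with $\sigma(K)$ as subsets of $I$ and checking $\theta^{w_0w_K}=\theta^{w_0}$, but both are direct consequences of definitions and of $K\subset I(\theta)$; I do not anticipate any serious difficulty.
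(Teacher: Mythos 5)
Your overall route is exactly the paper's: start from the expansion of $\mathbb{D}_{\bf G}([\Delta(\theta)_K])$ over $w\in X_K$, kill every $w$ with $w(K)\subsetneq\mathscr{H}(w)$ by the alternating-sum argument, and identify the surviving summand $w=w_0w_K$ via $\mathscr{H}(w_0w_K)=\sigma(K)$ and $\theta^{w_0w_K}=\theta^{w_0}$. Your vanishing step (the involution $J\mapsto J\,\triangle\,\{t\}$ for a fixed $t\in\mathscr{H}(w)\setminus w(K)$, or equivalently factoring out $\sum_{J''\subset\mathscr{H}(w)\setminus w(K)}(-1)^{|J''|}=0$) is correct and supplies precisely what the paper leaves as ``easy to check''.

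The step that fails is your justification of the equality case $w(K)=\mathscr{H}(w)\iff w=w_0w_K$. You invoke $|\mathscr{H}(w)|=|I|-\ell(w_0w)$, but $\mathscr{H}(w)$ is the complement in $I$ of the \emph{left descent set} of $w$, whose cardinality is not $\ell(w_0)-\ell(w)$ in general: already $w=e$ gives $|\mathscr{H}(e)|=|I|$, whereas your formula gives $|I|-\ell(w_0)$. So the count does not pin down $\ell(w)$, and without the ``only if'' direction you cannot exclude further non-vanishing summands, which is essential to the lemma. A correct argument goes through Deodhar's trichotomy for the coset $wW_K$: for $w\in X_K$ and $i\in\mathscr{H}(w)$, either $s_iw\in X_K$ or $s_iw=ws_k$ for some $k\in K$, and the latter happens exactly when $w(\alpha_k)=\alpha_i$, i.e.\ $i\in w(K)$. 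Hence $\mathscr{H}(w)\setminus w(K)=\{i\mid s_iw>w,\ s_iw\in X_K\}$, and this set is empty if and only if $w$ is the longest element of $X_K$, namely $w=w_0w_K$. (A minor separate slip: $w_0w_K(\Delta_K)=-w_0(\Delta_K)$, not $w_0(\Delta_K)$, since $w_K(\Delta_K)=-\Delta_K$ up to relabelling; this does not affect your set-level identification $w_0w_K(K)=\sigma(K)$.) With the descent argument substituted for the faulty count, your proof is complete and coincides with the paper's.
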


If we set $\theta$ trivial and $K=I$, we see that $\Delta(\op{tr})_I=\op{St}$, where $\op{St}$ is the infinite dimensional Steinberg module (see \cite{Y}). By Lemma \ref{lemma3}, we have
$$\mathbb{D}_{\bf G}([\op{St}])=\sum_{J\subset I}(-1)^{|J|}[\Delta(\op{tr})_J].$$
The right-hand side of the previous equation is just $[{\bf \Bbbk}_{\op{tr}}]$ by Proposition \ref{two basis}. Thus $\mathbb{D}_{\bf G}([\op{St}])=[{\bf \Bbbk}_{\op{tr}}]$.
It suggests us to consider the formula of $\mathbb{D}_{\bf G}[E(\theta)_J]$ for each $J\subset I(\theta)$.

Recall the notation
$$\sigma(K)=\{i\in I\mid s_i=w_0s_jw_0 \ \text{for some} \ j\in K \}$$ for a subset $K$ of $I$.
Now for a fixed $\theta \in \widehat{\bf T} $, we introduce a function $\mathbb{D}_{\theta}$ from  the set of subsets of  $I(\theta)$ to the set of subsets of $I(\theta^{w_0})$ . For $J\subset I(\theta)$,  we set $$\mathbb{D}_{\theta}(J)=\sigma(I(\theta)) \backslash \sigma(J)= I(\theta^{w_0}) \backslash \sigma(J).$$ Then it is easy to check that $\mathbb{D}_{\theta^{w_0}} (\mathbb{D}_{\theta}(J))=J$ with $J\subset I(\theta)$.
\begin{Thm} \label{dual of basis} For $\theta\in \widehat{\bf T}$ and $J\subset I(\theta)$, we have
$$\mathbb{D}_{\bf G}([E(\theta)_J])=[E(\theta^{w_0})_{\mathbb{D}_{\theta}(J)}].$$
\end{Thm}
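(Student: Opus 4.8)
The plan is to pass through the $\Delta$-basis, where the action of $\mathbb{D}_{\bf G}$ has already been computed in Lemma \ref{lemma3}, and then use the unitriangular change-of-basis matrices from Proposition \ref{two basis} together with the combinatorial properties of the map $\sigma$ and the auxiliary map $\mathbb{D}_\theta$. Concretely, by Proposition \ref{two basis}(2) we may write $[E(\theta)_J]=\sum_{J\subset K\subset I(\theta)}(-1)^{|K|-|J|}[\Delta(\theta)_K]$, so that applying $\mathbb{D}_{\bf G}$ and Lemma \ref{lemma3} gives
\[
\mathbb{D}_{\bf G}([E(\theta)_J])=\sum_{J\subset K\subset I(\theta)}(-1)^{|K|-|J|}\sum_{L\subset \sigma(K)}(-1)^{|L|}[\Delta(\theta^{w_0})_L].
\]
The goal is to identify the right-hand side with $[E(\theta^{w_0})_{\mathbb{D}_\theta(J)}]$, which by Proposition \ref{two basis}(2) (applied to the character $\theta^{w_0}$, noting $I(\theta^{w_0})=\sigma(I(\theta))$) equals $\sum_{\mathbb{D}_\theta(J)\subset L\subset I(\theta^{w_0})}(-1)^{|L|-|\mathbb{D}_\theta(J)|}[\Delta(\theta^{w_0})_L]$.

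Since $\{[\Delta(\theta^{w_0})_L]\mid L\subset I(\theta^{w_0})\}$ is part of a basis, it suffices to compare coefficients of each $[\Delta(\theta^{w_0})_L]$. First I would record the key combinatorial facts about $\sigma$: it is induced by the involution $i\mapsto$ (the index of $w_0s_iw_0$) on $I$, hence $\sigma$ is a bijection on subsets, inclusion-preserving, and commutes with union, intersection, and complement within $I(\theta)$ (using $\sigma(I(\theta))=I(\theta^{w_0})$). In particular, for fixed $L\subset I(\theta^{w_0})$, the condition $L\subset\sigma(K)$ is equivalent to $\sigma^{-1}(L)\subset K$. Thus the coefficient of $[\Delta(\theta^{w_0})_L]$ on the left is
\[
(-1)^{|L|}(-1)^{-|J|}\sum_{\substack{J\cup\sigma^{-1}(L)\subset K\subset I(\theta)}}(-1)^{|K|}.
\]
The inner alternating sum over an interval $[J\cup\sigma^{-1}(L),\,I(\theta)]$ in the Boolean lattice vanishes unless $J\cup\sigma^{-1}(L)=I(\theta)$, in which case it is $(-1)^{|I(\theta)|}$. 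The condition $J\cup\sigma^{-1}(L)=I(\theta)$ translates, after applying $\sigma$, to $\sigma(J)\cup L=I(\theta^{w_0})$, i.e. $L\supset I(\theta^{w_0})\setminus\sigma(J)=\mathbb{D}_\theta(J)$; so the left-hand coefficient is supported exactly on the same set of $L$ as the right-hand side. It then remains to check the signs agree: on the support, the left coefficient is $(-1)^{|L|-|J|}(-1)^{|I(\theta)|}$, and one must verify this equals $(-1)^{|L|-|\mathbb{D}_\theta(J)|}$, i.e. $(-1)^{|I(\theta)|-|J|}=(-1)^{-|\mathbb{D}_\theta(J)|}$. Since $\mathbb{D}_\theta(J)=I(\theta^{w_0})\setminus\sigma(J)$ has size $|I(\theta)|-|J|$ (as $\sigma$ is a bijection onto $I(\theta^{w_0})$ and $|I(\theta^{w_0})|=|I(\theta)|$), this is automatic.

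I expect the only real subtlety to be bookkeeping: making sure the various restrictions $K\subset I(\theta)$, $L\subset\sigma(K)$, $L\subset I(\theta^{w_0})$ are all compatible, that $\sigma$ really does carry the interval $[J,I(\theta)]$ bijectively onto $[\sigma(J),I(\theta^{w_0})]$, and that the signs $(-1)^{|K|-|J|}$ versus $(-1)^{|L|-|K|}$ (the two conventions for the $A^{-1}$ matrix) are handled consistently. A cleaner packaging, which I would use if the direct comparison gets unwieldy, is to observe that Lemma \ref{lemma3} says $\mathbb{D}_{\bf G}$ intertwines the $\Delta$-basis with itself via the operator $[\Delta(\theta)_K]\mapsto\sum_{J\subset\sigma(K)}(-1)^{|J|}[\Delta(\theta^{w_0})_J]$, whose matrix (in the $\Delta$-basis, for fixed $\theta$) is $\sigma$ composed with the ``Boolean Möbius'' matrix $(B_{K,L})$; conjugating by the change of basis $(B_{K,L})$ from $\Delta$ to $E$ then collapses to the permutation matrix of $\mathbb{D}_\theta$, because $B^{-1}(\sigma B)B = \sigma B B$ up to the relabelling, and $B$ is an involution up to sign on the Boolean lattice (as $(-1)^{|L|-|K|}$ is, squared, the identity). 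Either way the statement follows, and as a sanity check the case $\theta=\mathrm{tr}$, $J=I$ recovers $\mathbb{D}_{\bf G}([\mathrm{St}])=[\Bbbk_{\mathrm{tr}}]$ already verified in the text, since $\mathbb{D}_{\mathrm{tr}}(I)=\emptyset$ and $E(\mathrm{tr})_\emptyset=\Bbbk_{\mathrm{tr}}$.
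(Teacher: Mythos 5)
Your proposal is correct and follows essentially the same route as the paper: expand $[E(\theta)_J]$ in the $\Delta$-basis via Proposition \ref{two basis}, apply Lemma \ref{lemma3}, exchange the order of summation, and observe that the inner alternating sum over a Boolean interval vanishes unless $L\cup\sigma(J)=I(\theta^{w_0})$, which is exactly the condition $L\supset\mathbb{D}_\theta(J)$; the final sign check via $|\mathbb{D}_\theta(J)|=|I(\theta)|-|J|$ is also the paper's. The only cosmetic difference is that you phrase the support condition as $J\cup\sigma^{-1}(L)=I(\theta)$ rather than $\sigma(K)\supset L\cup\sigma(J)$, which is equivalent since $\sigma$ is induced by a bijection of $I$.
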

\begin{proof}We calculate it directly. By Proposition \ref{two basis}, we have
$$\mathbb{D}_{\bf G}([E(\theta)_J])= \sum_{J\subset K \subset I(\theta)}(-1)^{|K|-|J|}\mathbb{D}_{\bf G}([\Delta(\theta)_K])$$
Using Lemma \ref{lemma3}, the above formula is equal to
$$\sum_{J\subset K \subset I(\theta)}(-1)^{|K|-|J|} \sum_{L\subset \sigma(K)} (-1)^{|L|}[\Delta(\theta^{w_0})_L],$$
which is also equal to
$$\sum_{L\subset \sigma(I(\theta))}(-1)^{|L|-|J|}(\sum_{\sigma(K)\supset L\cup \sigma(J)}(-1)^{|K|} ) [\Delta(\theta^{w_0})_L].$$

When $\sigma({I(\theta)})=I(\theta^{w_0})\supsetneq L\cup \sigma(J)$, we have $$\displaystyle \sum_{\sigma(K)\supset L\cup \sigma(J)}(-1)^{|K|}=0.$$ When $I(\theta^{w_0})= L\cup \sigma(J)$ which implies that $L\supset I(\theta^{w_0})\backslash \sigma(J)=\mathbb{D}_{\theta}(J)$, thus in this case we have $$\displaystyle\sum_{\sigma(K)\supset L\cup \sigma(J)}(-1)^{|K|}=(-1)^{|I(\theta^{w_0})|} .$$
Therefore we get
$$
\aligned
\mathbb{D}_{\bf G}([E(\theta)_J])&\ = \sum_{\mathbb{D}_{\theta}(J) \subset L \subset I(\theta^{w_0})}
(-1)^{|L|-|J|+|I(\theta^{w_0})|} [\Delta(\theta^{w_0})_L]\\
&\ = \sum_{\mathbb{D}_{\theta}(J) \subset L \subset {I(\theta^{w_0})}}
(-1)^{|L|-|\mathbb{D}_{\theta}(J)|} [\Delta(\theta^{w_0})_L]= [E(\theta^{w_0})_{\mathbb{D}_{\theta}(J)}]
\endaligned
$$
by  Proposition \ref{two basis}. The theorem is proved.
\end{proof}

Since $\{[E(\theta)_J] \mid \theta\in \widehat{\bf T}, J\subset I(\theta)\}$ is a basis of $K_0(\mathscr{O}({\bf G}))$, we get the following corollary by Theorem \ref{dual of basis}.
\begin{Cor} $\mathbb{D}_{\bf G}\circ \mathbb{D}_{\bf G}$ is the identity functor on $K_0(\mathscr{O}({\bf G}))$. So $\mathbb{D}_{\bf G}$ is called the Alvis-Curits duality of infinite type.
\end{Cor}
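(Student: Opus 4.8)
The plan is to reduce everything to a basis computation. By Proposition \ref{two basis}, the classes $\{[E(\theta)_J]\mid \theta\in\widehat{\bf T},\ J\subset I(\theta)\}$ form a $\mathbb{Z}$-basis of $K_0(\mathscr{O}({\bf G}))$, and $\mathbb{D}_{\bf G}$ is $\mathbb{Z}$-linear, being the alternating sum $\sum_{J\subset I}(-1)^{|J|}\mathcal{R}^J\mathcal{R}_J$ of additive endomorphisms of $K_0(\mathscr{O}({\bf G}))$ induced by exact functors. Hence it suffices to verify that $\mathbb{D}_{\bf G}\circ\mathbb{D}_{\bf G}$ fixes each $[E(\theta)_J]$.

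First I would apply Theorem \ref{dual of basis} once: $\mathbb{D}_{\bf G}([E(\theta)_J])=[E(\theta^{w_0})_{\mathbb{D}_{\theta}(J)}]$, where $\mathbb{D}_{\theta}(J)\subset I(\theta^{w_0})$, so the right-hand side is again one of our basis elements, attached to the character $\theta^{w_0}$. Applying Theorem \ref{dual of basis} a second time, now to the pair $(\theta^{w_0},\mathbb{D}_{\theta}(J))$, gives
$$\mathbb{D}_{\bf G}\big([E(\theta^{w_0})_{\mathbb{D}_{\theta}(J)}]\big)=\big[E\big((\theta^{w_0})^{w_0}\big)_{\mathbb{D}_{\theta^{w_0}}(\mathbb{D}_{\theta}(J))}\big].$$
It then remains to simplify the two indices. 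Since $w_0$ is an involution in $W$, the $W$-action on $\widehat{\bf T}$ yields $(\theta^{w_0})^{w_0}=\theta^{w_0^2}=\theta$, and in particular $I\big((\theta^{w_0})^{w_0}\big)=I(\theta)$, so all subsets in sight lie in the correct index set. For the subset index, the identity $\mathbb{D}_{\theta^{w_0}}(\mathbb{D}_{\theta}(J))=J$ for $J\subset I(\theta)$ was already recorded immediately before Theorem \ref{dual of basis}; it follows at once from the definition $\mathbb{D}_{\theta}(J)=\sigma(I(\theta))\backslash\sigma(J)$, together with the facts that $\sigma$ is the involution on subsets of $I$ induced by the involution $s_i\mapsto w_0 s_i w_0$ of $S$, and that $\sigma(I(\theta))=I(\theta^{w_0})$. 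Combining, $\mathbb{D}_{\bf G}\big(\mathbb{D}_{\bf G}([E(\theta)_J])\big)=[E(\theta)_J]$.

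Since this holds for every basis element, the $\mathbb{Z}$-linear map $\mathbb{D}_{\bf G}\circ\mathbb{D}_{\bf G}$ is the identity on $K_0(\mathscr{O}({\bf G}))$, which is exactly the claim; it also shows $\mathbb{D}_{\bf G}$ is an involutive automorphism of $K_0(\mathscr{O}({\bf G}))$. There is no real obstacle in this argument — it is a formal consequence of Theorem \ref{dual of basis} — the only point needing a moment's care is the bookkeeping with the two involutions ($w_0$ acting on $\widehat{\bf T}$ and $\sigma$ acting on subsets) and checking that the intermediate expression genuinely indexes a basis element, both of which were prepared in the discussion preceding Theorem \ref{dual of basis}.
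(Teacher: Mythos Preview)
Your proposal is correct and follows exactly the approach the paper indicates: the corollary is stated immediately after Theorem \ref{dual of basis} with no separate proof beyond noting that $\{[E(\theta)_J]\}$ is a basis, and you have simply spelled out the two applications of that theorem together with the involution properties of $w_0$ and $\sigma$ already recorded in the text. There is nothing to add or correct.
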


\begin{Cor} For $\theta\in \widehat{\bf T}$ and $J\subset I(\theta)$, we have
$$\mathbb{D}_{\bf G}([\Delta(\theta)_J])=[\nabla(\theta^{w_0})_{\mathbb{D}_{\theta}(J)}].$$
\end{Cor}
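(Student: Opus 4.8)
The plan is to deduce this corollary from Theorem~\ref{dual of basis} together with the transition matrices recorded in Proposition~\ref{two basis}, without re-running the double-coset computation. First I would expand $[\Delta(\theta)_J]$ in the $E$-basis: by the matrix $(A_{K,L})$ we have $[\Delta(\theta)_J]=\sum_{J\subset K\subset I(\theta)}[E(\theta)_K]$. Applying the linear operator $\mathbb{D}_{\bf G}$ and invoking Theorem~\ref{dual of basis}, this becomes $\mathbb{D}_{\bf G}([\Delta(\theta)_J])=\sum_{J\subset K\subset I(\theta)}[E(\theta^{w_0})_{\mathbb{D}_\theta(K)}]$.

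The key combinatorial step is to re-index this sum over the target poset. Since $\mathbb{D}_\theta(K)=I(\theta^{w_0})\setminus\sigma(K)$ and $\sigma$ is an inclusion-preserving bijection from subsets of $I(\theta)$ to subsets of $I(\theta^{w_0})=\sigma(I(\theta))$, the condition $J\subset K\subset I(\theta)$ translates, upon setting $L=\mathbb{D}_\theta(K)$, into $\mathbb{D}_\theta(I(\theta))=\emptyset\subset L\subset \mathbb{D}_\theta(J)$, i.e. $L\subset \mathbb{D}_\theta(J)$, with $L$ ranging over all subsets of $\mathbb{D}_\theta(J)$. Hence $\mathbb{D}_{\bf G}([\Delta(\theta)_J])=\sum_{L\subset \mathbb{D}_\theta(J)}[E(\theta^{w_0})_L]$. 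Here I should double check one subtlety: a priori $L$ should range over subsets of $\mathbb{D}_\theta(J)$ that lie in $I(\theta^{w_0})$, but every such $L$ automatically satisfies $L\subset\mathbb{D}_\theta(J)\subset I(\theta^{w_0})$, so no extra constraint is lost.

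Finally I would recognize the right-hand side via the $\nabla$-basis: from the discussion preceding Proposition~\ref{two basis}, $[\nabla(\theta^{w_0})_{\mathbb{D}_\theta(J)}]=\sum_{L\subset \mathbb{D}_\theta(J)}[E(\theta^{w_0})_L]$, which is exactly what we obtained. Therefore $\mathbb{D}_{\bf G}([\Delta(\theta)_J])=[\nabla(\theta^{w_0})_{\mathbb{D}_\theta(J)}]$, as claimed.

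The only real obstacle is bookkeeping with the map $\sigma$ and its interaction with complementation inside $I(\theta^{w_0})$: one must be careful that $\mathbb{D}_\theta$ reverses inclusions and that $\mathbb{D}_\theta(I(\theta))=\emptyset$, so that the interval $\{K : J\subset K\subset I(\theta)\}$ maps bijectively onto the interval $\{L : \emptyset\subset L\subset \mathbb{D}_\theta(J)\}$. Once this order-reversing bijection of intervals is set up cleanly, the corollary is immediate; alternatively, one could bypass the $E$-basis entirely by feeding Lemma~\ref{lemma3} into the matrix $(C_{K,L})$, but the route through Theorem~\ref{dual of basis} is shorter.
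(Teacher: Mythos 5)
Your proposal is correct and follows essentially the same route as the paper: expand $[\Delta(\theta)_J]$ in the $E$-basis, apply Theorem \ref{dual of basis} termwise, use that $K\supset J$ if and only if $\mathbb{D}_\theta(K)\subset\mathbb{D}_\theta(J)$ to re-index, and identify the result as $[\nabla(\theta^{w_0})_{\mathbb{D}_\theta(J)}]$ via Proposition \ref{two basis}. The extra care you take with the order-reversing bijection of intervals is a correct elaboration of the paper's one-line observation, not a different argument.
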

\begin{proof} By Proposition \ref{two basis} and Theorem \ref{dual of basis}, we have
$$
\aligned
\mathbb{D}_{\bf G}([\Delta(\theta)_J])= \sum_{K \supset J} \mathbb{D}_{\bf G}([E(\theta)_K])=\sum_{K \supset J} [E(\theta^{w_0})_{\mathbb{D}_{\theta}(K)}]
\endaligned
$$
Note that $\mathbb{D}_{\theta}(K)\subset \mathbb{D}_{\theta}(J)$ if and only if  $K \supset J$. Thus we have
$$
\aligned
\mathbb{D}_{\bf G}([\Delta(\theta)_J])=\sum_{\mathbb{D}_{\theta}(K) \subset \mathbb{D}_{\theta}(J)} [E(\theta^{w_0})_{\mathbb{D}_{\theta}(K)}]=[\nabla(\theta^{w_0})_{\mathbb{D}_{\theta}(J)}]
\endaligned
$$
also by Proposition \ref{two basis}.
\end{proof}

\bigskip

\bibliographystyle{amsplain}

\end{document}